\crefname{hypothesis}{Hypothesis}{Hypotheses}
\title{The Vlasov-Poisson system with a uniform magnetic field: propagation of moments and regularity\thanks{Submitted to the editors 26th October 2020.}}
\author{Alexandre Rege\thanks{Sorbonne-Universit\'e, CNRS, Universit\'e de Paris, Laboratoire Jacques-Louis Lions (LJLL), 4 place Jussieu, F-75005 Paris, France 
		(\email{alexandre.rege@sorbonne-universite.fr}, \url{https://arege.pages.math.cnrs.fr/}).}}
\newcommand{\norme}[1]{\left\Vert #1\right\Vert}
\newcommand{\abs}[1]{\left\lvert #1\right\rvert}
\newcommand{\R}{\mathbb{R}}
\newcommand{\NN}{\mathbb{N}}
\newcommand{\intervalleoo}[2]{\mathopen{]}#1\, ,#2\mathclose{[}}
\newcommand{\intervalleff}[2]{\mathopen{[}#1\, ,#2\mathclose{]}}
\newcommand{\intervalleof}[2]{\mathopen{]}#1\, ,#2\mathclose{]}}
\newcommand{\intervallefo}[2]{\mathopen{[}#1\, ,#2\mathclose{[}}
\begin{document}

\maketitle

\begin{abstract}
  We show propagation of moments in velocity for the 3-dimensional Vlasov-Poisson system with a uniform magnetic field $B=(0,0,\omega)$ by adapting the work of Lions, Perthame. The added magnetic field also produces singularities at times which are the multiples of the cyclotron period $t=\frac{2\pi k}{\omega}$, $k \in \NN$. This result also allows to show propagation of regularity for the solution.
  For uniqueness, we extend Loeper's result by showing that the set of solutions with bounded macroscopic density is a uniqueness class.

\end{abstract}

\begin{keywords}
Vlasov-Poisson, propagation of moments, magnetic field, regularity, uniqueness
\end{keywords}

\begin{AMS}
  76X05, 82C40, 35A02
\end{AMS}

\section{Introduction}
We consider the Cauchy problem for the Vlasov-Poisson system with an external magnetic field, which is given by
\begin{equation}\label{sys:VPwB}
\left\{
\begin{aligned}
& \partial_t f + v\cdot \nabla_x f + E \cdot \nabla_{v} f + v \wedge B \cdot \nabla_{v}	 f= 0, \\
& E(t,x)= \frac{1}{4\pi}\int_{\R^3}\frac{x-y}{\abs{x-y}^3}\rho(t,y)dy \text{ with }\rho(t,x):=\int_{\mathbb R^3} f(t,x) dv\\
& f(0,x,v)=f^{in}(x,v)\geq 0.
\end{aligned}
\right.
\end{equation}

This set of equations governs the evolution of a cloud of charged particles, where $f(t,x,v)$ is the distribution function at time $t \geq 0$, position $x \in \mathbb{R}^3$ and velocity $v \in \mathbb{R}^3$. $E$ corresponds to the self-consistent electric field and $B$ is an external, constant and uniform magnetic field given by 

\begin{equation}\label{exp:magn}
B=\begin{pmatrix}
0\\
0 \\
\omega 
\end{pmatrix}
\end{equation}
where $\omega>0$ is the cyclotron frequency.

The unmagnetized Vlasov-Poisson system has been extensively studied with the works of Arsenev \cite{AR75} for weak solutions, Okabe and Ukai in dimension 2 \cite{OU78} and Bardos and Degond for small initial data \cite{BD85}. In the case of general initial data in dimension 3, two main approaches have been developed. The first one is based on the study of the charateristic curves with the papers from Pfaffelmoser and Sch\"affer \cite{P92,S91}. The second approach, first introduced for Vlasov type equations by Lions and Perthame \cite{LP91}, is based on the propagation of moments of the distribution function. This has resulted in several works where similar propagation properties are shown in the case of more general systems \cite{GJP00} and also in the case of more general assumptions \cite{C99,CZ16,P12,P14}.

As for the Vlasov-Poisson system with an external magnetic field, it is a system of considerable importance for the modeling of tokamak plasmas. For this reason, there exists an abundant literature on the case with strong magnetic field, where the aim is to derive asymptotic models \cite{B09,DF16,FS98,FS00,GS99,GS03} and devise numerical methods that capture this asymptotic behavior \cite{CLMZ17,FHLS15}. The Vlasov-Poisson system with an external and homogeneous magnetic field has also been studied in the half-space and in an infinite cylinder in \cite{S14,ST17}.
	
With the external magnetic field, the first difficulty is finding an appropriate representation formula for the macroscopic density, since the characteristics are a lot more complex than in the case without magnetic field. The second and most arduous difficulty is the existence of singularities at times $t=0,\frac{2\pi}{\omega}, \frac{4\pi}{\omega},...$, which correspond to the cyclotron periods, when we try to control the electric field. We manage to avoid these singularities because our estimates are valid for $t \in \intervalleff{0}{T_\omega}$ with $T_
\omega=\frac{\pi}{\omega}$ which is independent of $f^{in}$. This allows us to reiterate our analysis on $\intervalleff{T_\omega}{2 T_\omega}$ and so on.

Hence, in this paper, we succeed in extending the results of \cite{LP91} to the case of Vlasov-Poisson with an homogeneous external magnetic field. This is a first step to proving propagation of moments in the case of a non-homogeneous magnetic field.

First, we detail our main result and several additional results in \cref{sec:res}. Then, in \cref{sec:prel}, we continue by presenting the basic definitions and lemmas that will be necessary for the proof of our main result in \cref{sec:proofmain}, which is the core of this work. More precisely, we will give the new representation formula for the macroscopic density in \cref{ssec:reprho} and show how we control the electric field with the "magnetized" characteristics in \cref{ssec:controlelec}. To treat the singularities that appear, we establish a Gr\"onwall inequality on $\intervalleff{0}{T_\omega}$ in \cref{ssec:gronwall} and show how this leads to propagation of moments for all time in \cref{ssec:propaallt}. In \cref{ssec:Bsource}, we explore a method where we place the magnetic part of the Lorentz force in the source term, which doesn't work, but is so simple that it's still interesting to mention. Finally, we will give the proofs of our additional results in \cref{sec:proofadditional}. In particular, we will explicit a new condition on the initial data so as to obtain the boundedness of the macroscopic density.

\section{Results}\label{sec:res}
First we give some notations and definitions.

For $k \geq 0$ we denote the $k$-th order moment density and the $k$-th
order moment in velocity of a non-negative, measurable function $f \colon \R^6 \rightarrow \intervallefo{0}{\infty}$
by
\begin{equation*}
m_k(f)(x):=\int \abs{v}^k f dv \quad \text{and} \quad M_k(f):=\int m_k(f)(x)dx=\iint \abs{v}^k f dvdx.
\end{equation*}

We write $\mathcal{E}(t)$ for the energy of system \cref{sys:VPwB}, which is given by
\begin{equation}\label{eq:energy}
\mathcal{E}(t):=\frac{1}{2}\iint_{\R^3\times \R^3}\abs{v}^2f(t,x,v)dxdv+\frac{1}{2}\int_{\R^3}\abs{E(t,x)}^2dx,
\end{equation}
and we also write $\mathcal{E}_{in}:=\mathcal{E}(0)$.

Lastly we define the notion of solutions for the magnetized Vlasov-Poisson system \ref{sys:VPwB}, which is analogous the notion of weak solutions used in Arsenev \cite{AR75}.
\begin{definition}
Let $f \in L^\infty(\R_+;L^1(\R^3 \times \R^3) \cap L^2(\R^3 \times \R^3))$, $f$ is a weak solution of the magnetized Vlasov-Poisson system if $f\abs{v}^2 \in L^\infty(\R_+;L^1(\R^3 \times \R^3))$ and we have
\begin{equation}
\partial_t f+\text{\rm div}_x(vf)+\text{\rm div}_v((E+v \wedge B)f)=0 \text{ in } \mathcal{D}'(\R_+ \times \R^3 \times \R^3).
\end{equation}
\end{definition}
\subsection{Main result}\label{ssec:main}
First we present this paper's main result: propagation of velocity moments for the Vlasov-Poisson system with an external magnetic field.
\begin{theorem}[Propagation of moments]\label{theo:main}
Let $k_0>3, T>0, f^{in}=f^{in}(x,v)\geq 0$ a.e. with $f^{in}\in L^1\cap L^\infty (\R^3\times \R^3)$ and assume that
\begin{equation}\label{ineq:momentini}
\iint_{\R^3\times \R^3}\abs{v}^{k_0}f^{in}dxdv < \infty.
\end{equation}
Then for all $k$ such that $0\leq k \leq k_0$, there exists\\ $C=C(T,k,\omega,\norme{f^{in}}_1,\norme{f^{in}}_\infty,\mathcal{E}_{in},M_k(f^{in})) > 0$ and a weak solution 
\begin{equation}
f \in C(\R_+;L^p(\R^3 \times \R^3)) \cap L^\infty(\R_+;L^p(\R^3 \times \R^3))
\end{equation} 
$(1\leq p < +\infty)$ to the Cauchy problem for the Vlasov-Poisson system with magnetic field \cref{sys:VPwB} with $B$ given by \cref{exp:magn}
in $\R^3 \times \R^3$ such that
\begin{equation}\label{ineq:moment}
\iint_{\R^3\times \R^3}\abs{v}^{k}f(t,x,v)dxdv \leq C < +\infty,\quad 0\leq t \leq T.
\end{equation}
\end{theorem}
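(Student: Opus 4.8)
The plan is to transport the three-step strategy of Lions and Perthame \cite{LP91} to the magnetized setting: (i) derive a representation formula for the macroscopic density $\rho$ built on the \emph{explicit} characteristics of free streaming plus Larmor rotation, with the self-consistent field $E$ entering only through a Duhamel remainder; (ii) use that formula, together with the conserved quantities, to estimate $E$; (iii) close a Grönwall inequality for $M_k(f)$. I would first fix a smooth solution (the genuine weak solution being recovered at the end by approximation) and record the a priori identities: since $v\wedge B$ is divergence-free in $v$ and orthogonal to $v$, the transport field of \cref{sys:VPwB} is divergence-free in $(x,v)$, so $\norme{f(t)}_1=\norme{f^{in}}_1$ and $\norme{f(t)}_\infty=\norme{f^{in}}_\infty$, and the magnetic force does no work ($(v\wedge B)\cdot v=0$), so $\mathcal{E}(t)=\mathcal{E}_{in}$, exactly as in the field-free case. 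The elementary interpolation $m_0(f)(x)\le C\,\norme{f}_\infty^{k/(k+3)}\,m_k(f)(x)^{3/(k+3)}$ (and its $L^p$ counterpart) then bounds $\norme{\rho(t)}_{L^p}$, for $p$ up to $(k_0+3)/3>2$, by the conserved quantities and $M_{k_0}(f)(t)$.

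For the representation formula I would solve the characteristic equation $\dot V=E(t,X)+V\wedge B$ by variation of constants around the rotation $s\mapsto e^{sJ_\omega}$ (with $J_\omega$ the infinitesimal generator of rotations about $B$), integrate once more in time, and write a Duhamel identity expressing $f(t,x,v)$ as $f^{in}$ evaluated along the \emph{field-free magnetized flow} plus a time integral of a term containing $E$ and, after integrations by parts transferring the $v$-derivatives, $\nabla E$ along the true characteristics. Integrating in $v$ gives $\rho(t,x)$ as a free part, bounded in $L^\infty$ by $\norme{f^{in}}_1,\norme{f^{in}}_\infty,\mathcal{E}_{in}$, plus an integral over $[0,t]$ of a kernel against $E$, $\nabla E$ and $f$. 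The one structural change from \cite{LP91} is the change of variables used here: it is performed with respect to the field-free magnetized flow alone — so no uncontrolled $\nabla E$ enters the Jacobian — and its Jacobian is $\det\!\bigl(\int_0^t e^{(s-t)J_\omega}\,ds\bigr)=t\cdot\frac{4}{\omega^2}\sin^2\!\bigl(\frac{\omega t}{2}\bigr)$, which degenerates exactly at $t\in\frac{2\pi}{\omega}\NN$. On $(0,T_\omega]$, $T_\omega=\pi/\omega$, the sole zero is the one at $t=0$ — integrable, just as without the field — so the whole analysis goes through on $[0,T_\omega]$; it is essential that $T_\omega$ depends only on $\omega$.

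Next, the Grönwall step. Because $v\cdot(v\wedge B)=0$, the magnetic force also disappears from the moment balance: $\frac{d}{dt}M_k(f)=k\iint|v|^{k-2}v\cdot E\,f\le k\norme{E(t)}_\infty M_{k-1}(f)(t)$, with $M_{k-1}\le M_0^{1/k}M_k^{(k-1)/k}$. From the representation formula, the Hardy--Littlewood--Sobolev and Calderón--Zygmund inequalities, and the $L^1\cap L^p$ bounds on $\rho$, I would bound $\norme{E(t)}_\infty$ and the auxiliary quantities in the integral term by $C\bigl(1+\sup_{[0,t]}M_{k_0}(f)\bigr)^{\theta}$ with some $\theta<1$ — this is where $k_0>3$ is used — the time convolution carrying at worst the integrable singularity inherited from the Jacobian. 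Substituting back yields a closed integral inequality for $Y(t):=\sup_{[0,t]}M_{k_0}(f)(s)$ on $[0,T_\omega]$ whose solution is finite and controlled by $\omega,\norme{f^{in}}_1,\norme{f^{in}}_\infty,\mathcal{E}_{in},M_{k_0}(f^{in})$.

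Finally, iteration and construction. All the conserved quantities are preserved and $T_\omega$ is data-independent, so I would restart the argument from initial time $T_\omega$ — the representation based at $t_0$ is non-degenerate on $(t_0,t_0+\frac{2\pi}{\omega})$ up to the integrable zero at $t_0$, hence valid on $[t_0,t_0+T_\omega]$ — then from $2T_\omega$, and so on; after $\lceil T/T_\omega\rceil$ steps one covers $[0,T]$ and obtains $\sup_{[0,T]}M_{k_0}(f)<\infty$, whence \cref{ineq:moment} for every $0\le k\le k_0$ by interpolation with $M_0$. To produce a genuine weak solution I would run this scheme on a standard approximating sequence (mollified $f^{in}$ and regularized field, as in Arsenev \cite{AR75}), whose moment bounds are uniform by the above, and pass to the limit, using the weak lower semicontinuity of $f\mapsto M_k(f)$ to retain \cref{ineq:moment}. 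The main obstacle is the second step: setting up the magnetized representation formula so that the change of variables involves only the explicit field-free flow, and identifying its degeneracy $t\sin^2(\omega t/2)$ precisely enough to see that the obstruction sits exactly at the cyclotron periods and is absent on $[0,T_\omega]$; once that is in place, the rest is a technical but essentially routine transcription of \cite{LP91}.
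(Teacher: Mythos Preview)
Your overall architecture---explicit magnetized free flow, Duhamel representation for $\rho$, identification of the change-of-variables Jacobian $\propto s\sin^2(\omega s/2)$ with its cyclotron zeros, and the restart-every-$T_\omega$ iteration---matches the paper. The gap is in how you close the Gr\"onwall loop. You propose to control $\norme{E(t)}_\infty$ and feed it into $\tfrac{d}{dt}M_k\le k\norme{E}_\infty M_{k-1}$. But Calder\'on--Zygmund does not map into $L^\infty$, and with only $k_0>3$ the a priori control on $\rho$ is in $L^{(k_0+3)/3}$, which for $3<k_0\le 6$ does not reach $L^p$ with $p>3$; hence there is no $L^\infty$ bound on $E$ available at this stage. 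The paper never estimates $\norme{E}_\infty$ here: it uses the H\"older pairing $\bigl|\tfrac{d}{dt}M_k\bigr|\le C\norme{E(t)}_{k+3}M_k(t)^{(k+2)/(k+3)}$ and bounds $\norme{E}_{k+3}$ via the representation formula plus Calder\'on--Zygmund at the finite exponent $k+3$. Relatedly, your claim that the free part $\rho_0$ is in $L^\infty$ is not justified; the paper only places it in $L^{(3k+9)/(k+6)}$, which is exactly what is needed for $\norme{E^0}_{k+3}$.

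The second, more structural, gap is the ``integrable singularity'' claim. The sharp estimate produced by the representation formula is
\[
\norme{\sigma(s,t,\cdot)}_{k+3}\;\le\;C\,s^{-1}\Bigl(\tfrac{\omega^2 s^2}{2(1-\cos\omega s)}\Bigr)^{2/3}M_k(t-s)^{1/(k+3)},
\]
so near $s=0$ the time kernel behaves like $s^{-1}$ and is \emph{not} integrable (exactly as in the unmagnetized case). A single crude H\"older estimate does give an integrable $s$-power, but at the price of a moment exponent strictly larger than $\tfrac{1}{k+3}$, and then the differential inequality for $M_k$ has total exponent $>1$ and does not close. The paper resolves this, as in \cite{LP91}, by splitting $\int_0^t=\int_0^{t_0}+\int_{t_0}^t$, using the crude estimate on $[0,t_0]$ and the sharp one on $[t_0,t]$, and then choosing $t_0=t_0(\mu_k(t))$ to balance; this produces $\ln(t/t_0)$ and a log-Gr\"onwall inequality $\tfrac{d}{dt}M_k\le C(1+\mu_k)(1+\ln(1+\mu_k))$, not a sublinear power. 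Without this $t_0$-splitting and the log closure your step (iii) does not go through.
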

\begin{remark}
As said in \cite{LP91}, the assumptions in \cref{theo:main} guarantee that the initial energy $\mathcal{E}_{in}$ is finite.
\end{remark}
\begin{remark}
Like in the original paper, all the apriori estimates that will be presented in the proof are true for smooth solutions ($C^\infty$ with compact support). Since our estimates depend only on $T,k,\omega,\norme{f^{in}}_1,\norme{f^{in}}_\infty,\mathcal{E}_{in},M_k(f^{in})$, it is sufficient to pass to the limit in the approximate magnetized Vlasov-Poisson system. This approximate system is obtained by applying the regularization procedure introduced by Arsenev \cite{AR75} to prove the existence of weak solutions.
\end{remark}
Let's first mention that to prove the existence of weak solutions to \cref{sys:VPwB} is relatively straightforward by adapting Arsenev's work \cite{AR75}, even when the external magnetic field isn't homogeneous. The only requirement is to have $B \in L^\infty(\R^3)$.

As said above, \cref{theo:main} is an extension of the main result in \cite{LP91}. To obtain \cref{ineq:moment}, we follow approximately the same strategy, which is to establish a linear Gr\"onwall inequality on the velocity moment. First, by writing a differential inequality on the velocity moment, we realize that to obtain a Gr\"onwall inequality on the moments, we need to control a certain norm of the electric field. To do this, we require the information gained from the Vlasov equation. Hence, by using the characteristics, we can express the macroscopic charge density with a representation formula, which will in turn allow us to control the norm of the electric field. In our case, the added magnetic field significantly complicates the characteristics and the initial proof by extension. 

\subsection{Additional results}\label{ssec:additional}

Now we state a result regarding propagation of regularity for solutions to \ref{sys:VPwB}, where the initial condition is sufficiently regular. This is also an extension of a result stated in \cite{LP91} to the case with magnetic field. However here we present this result and its proof with much more detail than in \cite{LP91} by adapting section 4.5 of \cite{G13}.
\begin{theorem}[Propagation of regularity]\label{theo:reg} Let $h \in C^1(\R)$ such that 
	\begin{equation*}
		h \geq 0, h'\leq 0 \text{ and } h(r)=\mathcal{O}(r^{-\alpha}) \text{ with } \alpha >3.
	\end{equation*}
	and let $f^{in} \in C^1(\R^3)$ a probability density on $\R^3\times \R^3$ such that $f^{in}(x,v)\leq h(\abs{v})$ for all $x,v$ and which verifies
	\begin{equation}
	\iint_{\R^3\times \R^3}(1+\abs{v}^{k_0})f^{in}(x,v)dxdv < \infty
	\end{equation}
	with $k_0>6$.\\
	Then there exists a weak solution of the Cauchy problem for the Vlasov-Poisson system with magnetic field \cref{sys:VPwB} $(f,E)\in C^1(\R_+\times \R^3\times \R^3)\times C^1(\R_+\times \R^3)$ satisfying the decay estimate
	\begin{equation}\label{eq:decay}
	\underset{(t,x) \in \intervalleff{0}{T}\times \R^3}{\sup} f(t,x,v)+\abs{D_x f(t,x,v)}+\abs{D_v f(t,x,v)} = \mathcal{O}(\abs{v}^{-\alpha})
	\end{equation}
	for all $T>0$.
\end{theorem}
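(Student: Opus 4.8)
The plan is to bootstrap from the moment-propagation theorem (\cref{theo:main}) together with a Gr\"onwall argument on the derivatives of $f$ along the magnetized characteristics. First I would fix $T>0$ and recall that since $k_0>6>3$, \cref{theo:main} applies and yields a weak solution $f$ with $M_k(f)(t)\leq C(T)$ for $0\le k\le k_0$; in particular $\rho(t,\cdot)=\int f\,dv$ is bounded in $L^\infty_{t,x}$ (using the standard interpolation $\norme{\rho}_\infty \lesssim \norme{f}_\infty^{k/(k+3)} m_k(f)^{3/(k+3)}$ for some $k>3$), hence by elliptic regularity $E(t,\cdot)\in W^{1,p}$ for every finite $p$ and in particular $E$ is log-Lipschitz in $x$, uniformly on $[0,T]$. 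This regularity of $E$ is exactly what is needed to make the characteristic flow of \cref{sys:VPwB} well-defined and $C^1$: writing $(X,V)(s;t,x,v)$ for the backward-in-time characteristics, one has $f(t,x,v)=f^{in}(X(0),V(0))$, so bounding $f$ and its derivatives reduces to bounding $(X,V)(0)$ and the Jacobian $\partial_{(x,v)}(X,V)(0)$.

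The next step is the decay estimate \cref{eq:decay} for $f$ itself. The characteristic ODEs are $\dot X=V$, $\dot V=E(s,X)+V\wedge B$. Since $|E|$ is bounded on $[0,T]\times\R^3$ and $V\wedge B$ is linear in $V$ with $\abs{V\wedge B}\le\omega\abs V$, a Gr\"onwall estimate on $|V(s)|$ gives $\abs{V(0)}\ge c\,\abs v - C$ and $\abs{V(0)}\le C(\abs v+1)$ uniformly for $s\in[0,T]$, with constants depending only on $T,\omega$ and $\norme{E}_{L^\infty([0,T]\times\R^3)}$. Combining with $f(t,x,v)=f^{in}(X(0),V(0))\le h(\abs{V(0)})\le C\abs{V(0)}^{-\alpha}\le C'\abs v^{-\alpha}$ for $\abs v$ large (and trivially bounded for $\abs v$ bounded since $f^{in}\le h(0)<\infty$) gives the $\mathcal{O}(\abs v^{-\alpha})$ control of $\sup_{t,x} f$. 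This is the easy part.

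The main work, and the main obstacle, is propagating the $C^1$ regularity of $f$ with the same velocity decay for $D_xf$ and $D_vf$. Differentiating $f(t,x,v)=f^{in}(X(0),V(0))$ in $x$ and $v$ gives $D_{x,v}f = Df^{in}(X(0),V(0))\cdot\partial_{(x,v)}(X(0),V(0))$, so one needs: (i) a pointwise bound on the Jacobian matrix $J(s):=\partial_{(x,v)}(X(s),V(s))$, and (ii) the decay $\abs{Df^{in}}(X(0),V(0))=\mathcal O(\abs{V(0)}^{-\alpha})=\mathcal O(\abs v^{-\alpha})$, which holds by the hypothesis on $f^{in}$. For (i) the Jacobian solves the linearized (variational) system $\dot J = A(s)J$ with $A(s)=\begin{pmatrix}0 & I\\ D_xE(s,X(s)) & \widehat B\end{pmatrix}$ where $\widehat B$ is the antisymmetric matrix representing $v\mapsto v\wedge B$. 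Since $D_xE$ is only in $L^p$ (not $L^\infty$) — $\rho$ bounded does not give $E$ Lipschitz — one cannot close a naive Gr\"onwall bound on $\abs{J(s)}$. This is precisely the difficulty handled in \cite{LP91,G13}: one upgrades the regularity of $\rho$ by using the decay estimate on $f$ just obtained together with the extra moment $k_0>6$. Concretely, the representation formula for $\rho$ from \cref{ssec:reprho}, now fed with the $\mathcal O(\abs v^{-\alpha})$ bound on $f$ and the improved velocity moments, yields $\rho(t,\cdot)\in C^{0,\beta}$ for some $\beta>0$ uniformly on $[0,T_\omega]$ (and then on each $[kT_\omega,(k+1)T_\omega]$, so on $[0,T]$), whence $D_xE\in C^{0,\beta}$ is bounded, the characteristic flow is $C^1$, and $\abs{J(s)}\le\exp(C(T)\,s)$ by Gr\"onwall. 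Plugging this into the differentiated relation and using (ii) closes \cref{eq:decay} for $D_xf$ and $D_vf$. Finally, $C^1$ regularity of $f$ and of $E=-\nabla_x(-\Delta)^{-1}\rho$ in $(t,x,v)$ follows from the $C^1$ flow, the chain rule, and elliptic regularity, and one checks $(f,E)$ solves \cref{sys:VPwB} in the strong (hence weak) sense; the $\mathcal O(\abs v^{-\alpha})$ decay is uniform on $[0,T]$ by construction. I expect the delicate point to be the bootstrap from "$\rho$ bounded" to "$\rho$ H\"older" in the presence of the magnetic rotation in the characteristics — one must verify that the cyclotron singularities at $t=2\pi k/\omega$ do not spoil the H\"older modulus, which is why the estimates are run on intervals of length $T_\omega=\pi/\omega$ and then iterated, exactly as in the proof of \cref{theo:main}.
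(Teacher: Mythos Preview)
Your outline has the right shape but contains two substantive gaps, and the route you propose for the derivative bounds diverges from the paper in a way that does not close.

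First, the interpolation you invoke to get $\rho\in L^\infty_{t,x}$ directly from moment propagation is not valid: the pointwise bound $\rho(x)\lesssim \norme{f}_\infty^{k/(k+3)} m_k(f)(x)^{3/(k+3)}$ requires the moment \emph{density} $m_k(f)(x)=\int\abs{v}^k f\,dv$ to be bounded in $x$, whereas \cref{theo:main} only controls the integrated moment $M_k(f)=\int m_k(f)\,dx$. What $k_0>6$ actually gives, via \cref{lem:ineq_mk}, is $\rho\in L^p$ for some $p>3$, hence $E\in L^\infty$ after splitting $\nabla K_3$; the paper then obtains $\rho\in L^\infty$ \emph{afterwards} from the comparison $f(t,x,v)\le h(\abs v-A_T t)$ with $A_T=\norme{E}_\infty$ (note incidentally that $V\wedge B\perp V$, so the magnetic field leaves $\abs V$ unchanged and no Gr\"onwall is needed for your characteristic bound on $\abs{V(0)}$).

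Second, and this is the real gap, your bootstrap ``$\rho$ bounded $\Rightarrow$ $\rho\in C^{0,\beta}$ via the representation formula of \cref{ssec:reprho}'' is circular: formula \cref{rep_rho} involves $f$ and $E$ (through $H_t$), whose regularity is precisely what you are trying to establish. The paper avoids this by the standard Lions--Perthame/\cite{G13} loop, carried out Eulerianly: differentiate the Vlasov equation to get a transport equation for $(D_xf,D_vf)$ with source bounded by $1+\abs{D_xE}+\abs{A}$ where $A=D_v(v\wedge B)$ is the constant antisymmetric matrix of $B$; the maximum principle then yields $(\abs{D_xf}+\abs{D_vf})(t)\le e^{J(t)}h(\abs v-A_Tt)$ with $J(t)=\int_0^t(1+\norme{D_xE(s)}_\infty+\norme{A}_\infty)\,ds$, hence $\norme{D_x\rho(t)}_\infty\le R_T e^{J(t)}$. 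The endpoint Calder\'on--Zygmund inequality $\norme{D_xE}_\infty\le C\bigl(1+\ln(1+\norme{D_x\rho}_\infty)\bigr)$ (\emph{not} a H\"older bound) then gives $J'(t)\le C(1+J(t))$, which closes by Gr\"onwall. The magnetic field enters only through the bounded matrix $A$, so no cyclotron singularities arise in this argument and no iteration over intervals of length $T_\omega$ is needed here, contrary to what you anticipate. The logarithmic Calder\'on--Zygmund estimate replacing your unjustified H\"older bootstrap is the missing ingredient.
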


Next, we state a result on the uniqueness of solutions to \ref{sys:VPwB} which is a direct adaptation of Loeper's paper \cite{L06}.

\begin{theorem}[Uniqueness]\label{theo:uniq}
Let $f^{in} \in L^1\cap L^\infty (\R^3\times \R^3)$ be a probability density such that for all $T>0$
\begin{equation}
\norme{\rho}_{L^\infty(\intervalleff{0}{T} \times \R^3)} < +\infty
\end{equation}
then there exists at most one solution to the Cauchy problem for the Vlasov-Poisson system with magnetic field \cref{sys:VPwB}.	
\end{theorem}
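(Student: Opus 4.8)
The plan is to follow Loeper's strategy \cite{L06}, which controls the squared 2-Wasserstein distance between two solutions by a quantity that obeys a Grönwall inequality, the key input being a stability estimate for the Poisson equation in terms of the $L^\infty$ norm of the densities. Suppose $(f_1,E_1)$ and $(f_2,E_2)$ are two weak solutions with the same initial datum $f^{in}$ and with $\rho_1,\rho_2 \in L^\infty(\intervalleff{0}{T}\times\R^3)$. Let $(X_i,V_i)(t,x,v)$ denote the characteristic flow associated with the field $E_i$ (together with the fixed, bounded, linear magnetic term $v\wedge B$), so that $f_i(t) = (X_i(t),V_i(t))_\# f^{in}$. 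I would define
\begin{equation*}
Q(t) := \frac12 \iint_{\R^3\times\R^3}\Big( \abs{X_1(t,x,v)-X_2(t,x,v)}^2 + \abs{V_1(t,x,v)-V_2(t,x,v)}^2 \Big) f^{in}(x,v)\,dx\,dv,
\end{equation*}
which dominates $W_2^2(f_1(t),f_2(t))$ and vanishes at $t=0$.

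Next I would differentiate $Q$ in time. The $v\cdot\nabla_x$ transport part and the magnetic term $v\wedge B\cdot\nabla_v$ are globally Lipschitz in $(x,v)$ with a constant depending only on $\omega$, so they contribute a term bounded by $C_\omega Q(t)$; this is precisely why the uniform magnetic field causes no trouble here, in contrast with the moment-propagation argument. The only delicate contribution is the electric one,
\begin{equation*}
\iint \big(V_1-V_2\big)\cdot\big(E_1(t,X_1)-E_2(t,X_2)\big) f^{in}\,dx\,dv,
\end{equation*}
which I would split as $E_1(t,X_1)-E_1(t,X_2)$ plus $E_1(t,X_2)-E_2(t,X_2)$. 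For the first piece one uses that $E_1 = \nabla\Delta^{-1}\rho_1$ is log-Lipschitz (or, following Loeper, one uses the quantitative Loeper estimate) with modulus controlled by $\norme{\rho_1}_\infty$, giving a bound of the form $C Q(t)\,(1+\abs{\log Q(t)})$. For the second piece one invokes Loeper's stability lemma: $\norme{E_1(t)-E_2(t)}_{L^2} \leq C\,\big(\norme{\rho_1(t)}_\infty + \norme{\rho_2(t)}_\infty\big)^{1/2}\, W_2(\rho_1(t),\rho_2(t)) \leq C' \sqrt{Q(t)}$, and pairing against $V_1-V_2 \in L^2(f^{in})$ via Cauchy–Schwarz again yields a term $\lesssim Q(t) + \sqrt{Q(t)}\cdot\sqrt{Q(t)} \lesssim Q(t)$, up to the same logarithmic factor. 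Altogether $Q$ satisfies $Q'(t) \leq C\, Q(t)\,(1+\abs{\log Q(t)})$ on $\intervalleff{0}{T}$ with $Q(0)=0$, and the Osgood criterion forces $Q\equiv 0$, hence $f_1 = f_2$ on $\intervalleff{0}{T}$; since $T$ is arbitrary this gives uniqueness for all time.

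The main obstacle is making the differentiation of $Q$ rigorous at the level of weak solutions — i.e. justifying that the characteristics are well-defined and that $f_i(t)$ is their pushforward, which requires the bounded-density hypothesis to give enough regularity on $E_i$ (log-Lipschitz in space, continuous in time) for the ODE flow to exist and be unique; this is exactly the role played by $\norme{\rho}_\infty < \infty$. Once that is in place, the rest is Loeper's computation with the harmless extra linear term coming from $v\wedge B$, and one should also record that the constants are uniform on each $\intervalleff{0}{T}$ so that the all-time statement follows by the same iteration-in-time idea used elsewhere in the paper.
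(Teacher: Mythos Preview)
Your proposal is correct and follows essentially the same route as the paper: both define Loeper's weighted $L^2$ distance $Q(t)$ between the two characteristic flows, differentiate it, defer the electric contribution to Loeper's analysis, and observe that the magnetic term is harmless, yielding $Q'(t)\le C\,Q(t)(1+\ln(1/Q(t)))$ and hence $Q\equiv 0$. One small sharpening worth recording: the magnetic contribution $(V_1-V_2)\cdot\big((V_1-V_2)\wedge B\big)$ is in fact identically zero, so no $C_\omega$ is needed there.
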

Finally, we give a proposition that allows to build solutions with bounded macroscopic density, which is analogous to the condition given in Corollary 3 in \cite{LP91}.
\begin{proposition}\label{prop:boundrho}
Let $f^{in}$ verify the assumptions of \cref{theo:main} with $k_0 > 6$ and assume that $f^{in}$ also satisfies
\begin{equation}\label{eq:boundrho}
\begin{aligned}
 \text{ess sup}\{f^{in}(y+vt,w), \abs{y-x}\leq (R+\omega \abs{v})t^2e^{\omega t},\abs{w-v}\leq (R+\omega \abs{v})te^{\omega t} \}\\
 \in L^\infty(\intervallefo{0}{T}\times \R^3_x,L^1(\R^3_v))
\end{aligned}
\end{equation}
for all $R>0$ and $T>0$.

Then, the solution of \cref{sys:VPwB} verifies
\begin{equation}
\rho \in L^\infty(\intervalleff{0}{T} \times \R^3_x)
\end{equation}
for all $T>0$.
\end{proposition}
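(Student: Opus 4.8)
The plan is to deduce the bound on $\rho$ from the moment estimate of \cref{theo:main} by a characteristics argument in the spirit of Corollary~3 of \cite{LP91}. The first step is to turn the moment bound into an $L^\infty$ bound on the field. Since $k_0>6$, \cref{theo:main} gives $M_{k_0}(f(t))\le C(T)$ for $t\in\intervalleff{0}{T}$, while $f$ being transported along its characteristics gives $\norme{f(t)}_\infty\le\norme{f^{in}}_\infty$. Interpolating $\rho(t,x)=\int f\,dv$ between these two bounds (split the $v$-integral at $\abs v=R$ and optimize in $R$) yields $\rho\in L^\infty\bigl(\intervalleff{0}{T};L^1\cap L^{(k_0+3)/3}(\R^3_x)\bigr)$ with exponent $(k_0+3)/3>3$, and then splitting the Coulomb kernel at $\abs{x-y}=1$ and applying H\"older on the singular part --- which is legitimate exactly because the conjugate exponent is $<3/2$ --- gives
\begin{equation*}
\norme{E}_{L^\infty(\intervalleff{0}{T}\times\R^3)}\le C\bigl(\norme{\rho}_{L^\infty_tL^1_x}+\norme{\rho}_{L^\infty_tL^{(k_0+3)/3}_x}\bigr)\le C(T).
\end{equation*}

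The second step is to control how far the characteristics of \cref{sys:VPwB} drift from free transport on $\intervalleff{0}{T}$. I would carry this out on the regularized problems used in the existence proof, where the characteristics $s\mapsto(X,V)(s;t,x,v)$ are well defined and $f(t,x,v)=f^{in}\bigl(X(0;t,x,v),V(0;t,x,v)\bigr)$. Since $\abs{V\wedge B}\le\omega\abs V$ one has $\abs{\dot V(s)}\le\norme E_\infty+\omega\abs{V(s)}$ along a backward characteristic, so Gr\"onwall gives $\abs{V(s)}\le(\abs v+C)e^{\omega t}$ for $0\le s\le t\le T$; integrating this bound once and twice over $\intervalleff{0}{t}$ and absorbing $\norme E_\infty$ and $C$ into a single constant $R=R(T)$ produces
\begin{equation*}
\abs{V(0;t,x,v)-v}\le(R+\omega\abs v)\,t\,e^{\omega t},\qquad
\abs{X(0;t,x,v)-(x-vt)}\le(R+\omega\abs v)\,t^2\,e^{\omega t}.
\end{equation*}
This is where the magnetic field enters: the term $\omega\abs V$ in the velocity equation is what forces the exponential factor $e^{\omega t}$ and the $\omega\abs v$ that appear in \cref{eq:boundrho}.

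For the third step, the two displays above show that $\bigl(X(0;t,x,v),V(0;t,x,v)\bigr)$ stays in the set over which the essential supremum in \cref{eq:boundrho} is taken, for the constant $R=R(T)$ just obtained (and after a harmless enlargement of the radii to absorb the mollification of $f^{in}$, which is allowed because \cref{eq:boundrho} is assumed for \emph{every} $R>0$). Writing $g$ for the function appearing in \cref{eq:boundrho}, this gives
\begin{equation*}
\rho(t,x)=\int_{\R^3}f^{in}\bigl(X(0;t,x,v),V(0;t,x,v)\bigr)\,dv\le\int_{\R^3}g(t,x,v)\,dv\le\norme{g}_{L^\infty_{t,x}L^1_v}<\infty,
\end{equation*}
with a bound uniform in the regularization parameter; passing to the limit then yields $\rho\in L^\infty(\intervalleff{0}{T}\times\R^3_x)$ for all $T>0$.

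The main difficulty I anticipate is the combination of the second step with this limiting procedure, rather than any individual estimate: the field of the weak solution need not be Lipschitz, so the representation $f(t)=f^{in}\circ(X(0,\cdot),V(0,\cdot))$ and the pointwise bound by $g$ must be obtained on the approximating sequence, and one has to verify that this bound and its $L^1_v$-integrability survive the mollification of the data and the weak limit --- which is precisely what the uniformity in $R>0$ in \cref{eq:boundrho} is designed to permit. The first step, by contrast, is routine once \cref{theo:main} is available.
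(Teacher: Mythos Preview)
Your proposal is correct and follows essentially the same approach as the paper: use $k_0>6$ to get $E\in L^\infty$, apply Gr\"onwall along the characteristics to obtain the drift bounds $|V(0)-v|\le(R+\omega|v|)te^{\omega t}$ and $|X(0)-(x-vt)|\le(R+\omega|v|)t^2e^{\omega t}$, and then invoke \cref{eq:boundrho}. The only methodological difference is that the paper works directly with the weak characteristics (which exist once $k_0>6$ gives sufficient regularity on $E$, as in Corollary~3 of \cite{LP91}) rather than via a regularization and limit, but this is a matter of presentation and not of substance.
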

\section{Preliminaries}\label{sec:prel}
As said above, we now present some basic results necessary for the proofs.
First we recall the weak Young inequality. The proof of this basic inequality can be found in \cite{LL97}.
\begin{lemma}[Weak Young inequality]
	\label{w_Young}
	Let $1<p,q,r<\infty$ with $\frac{1}{p}+\frac{1}{q}=1+\frac{1}{r}$, then for all functions $f \in L^p(\R^n)$, $g \in L^{q,w}(\R^n)$ the convolution product $f \star g= \int_{\R^n} f(y)g(\cdot - y)dy \in L^r(\R^n)$ and satisfies 
	
	\begin{equation}
	\label{w_Youn}
	\norme{f\star g}_r\leq c\norme{f}_p \norme{g}_{q,w}
	\end{equation} 
	with $c=c(p,q,n)$ and by definition $g \in L^{q,w}(\R^n)$ iff $h$ is measurable and
	\begin{equation}
	\underset{\tau>0}{\sup}\left( \tau \left(\text{\rm vol}\left\{x \in \R^n \mid \abs{g(x)}>\tau \right\}\right)^\frac{1}{q}\right)<\infty.
	\end{equation}
	Furthermore, we can define a norm on $L^{q,w}(\R^n)$ given by
	\begin{equation}
     \norme{f}_{q,w}=\underset{\abs{A}< \infty}{\sup} \abs{A}^{-\frac{1}{q'}}\int_A \abs{f(x)}dx.
	\end{equation}
\end{lemma}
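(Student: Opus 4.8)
The plan is to deduce the weak Young inequality from the classical (strong) Young inequality together with the Marcinkiewicz interpolation theorem, using a decomposition of $g$ along its level sets. First I would reduce to the case $f,g\geq 0$, since $\abs{f\star g}\leq \abs{f}\star\abs{g}$, and normalize $\norme{f}_p=\norme{g}_{q,w}=1$ by homogeneity, so that it suffices to bound $\norme{f\star g}_r$ by a constant $c=c(p,q,n)$. The defining property of the weak norm gives the level-set bound $\text{vol}\{g>t\}\leq C\,t^{-q}$ for all $t>0$, and this is the only information about $g$ that I would use.

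For a threshold $s>0$ to be chosen, I would split $g=g_{\leq s}+g_{>s}$ into its parts where $g\leq s$ and where $g>s$. Integrating the level-set bound shows that the small part lies in $L^{a}$ for every $a>q$ with $\norme{g_{\leq s}}_{a}\leq C\,s^{1-q/a}$, while the large part lies in $L^{b}$ for every $1\leq b<q$ with $\norme{g_{>s}}_{b}\leq C\,s^{1-q/b}$. Applying the classical Young inequality to each piece — with $a=p'$, so that $\norme{f\star g_{\leq s}}_\infty\leq \norme{f}_p\norme{g_{\leq s}}_{p'}$, and with a fixed $b\in[1,q)$, so that $\norme{f\star g_{>s}}_{r_b}\leq \norme{f}_p\norme{g_{>s}}_{b}$ where $1/r_b=1/p+1/b-1$ — separates the convolution into a uniformly bounded term and a term controlled in $L^{r_b}$. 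Here the hypotheses $1<p$ and $r<\infty$ are exactly what guarantee $q<p'$, and hence that the exponent $1-q/p'$ is positive.

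To estimate the distribution function of $f\star g$ at a level $\alpha>0$, I would choose $s=s(\alpha)$ so that $\norme{f\star g_{\leq s}}_\infty\leq \alpha/2$; since $\norme{f\star g_{\leq s}}_\infty\leq C\,s^{1-q/p'}$ and a short computation gives $1-q/p'=q/r$, this amounts to $s\sim \alpha^{r/q}$. Then $\{f\star g>\alpha\}\subset\{f\star g_{>s}>\alpha/2\}$, and Markov's inequality together with the $L^{r_b}$ bound yields $\text{vol}\{f\star g>\alpha\}\leq C\,\alpha^{-r_b}s^{(1-q/b)r_b}$. Substituting $s\sim\alpha^{r/q}$ and using $1/r_b=1/r+1/b-1/q$ collapses the exponent of $\alpha$ to exactly $-r$, which is the weak-type bound $\norme{f\star g}_{r,w}\leq C$, i.e. $T_g\colon f\mapsto f\star g$ maps $L^p$ into $L^{r,w}$. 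Since this holds for every admissible triple, I would finally apply the Marcinkiewicz interpolation theorem to the sublinear operator $T_g$ between two such weak-type endpoints bracketing the target, upgrading $L^{r,w}$ to $L^r$ on the open range $1<p,r<\infty$ and producing the strong bound \cref{w_Youn}.

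The \emph{main obstacle} is not any single step but the bookkeeping: the endpoints supplied by this scheme are genuinely weak-type and off-diagonal, so one cannot extract the strong $L^p\to L^r$ bound at a single exponent, and the interpolation step is unavoidable. Correspondingly, the strict inequalities $1<p,q,r<\infty$ are essential, as the constant degenerates as any exponent approaches its endpoint; verifying that the threshold optimization in the distribution-function estimate closes to precisely $\alpha^{-r}$, and tracking the implied constants so that the final $c$ depends only on $p,q,n$, is where the care lies.
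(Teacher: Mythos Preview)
The paper does not actually prove this lemma: immediately after stating it, the authors write that ``the proof of this basic inequality can be found in \cite{LL97}'' and move on. So there is no in-paper argument to compare against; the result is simply quoted from Lieb--Loss.

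Your proposed proof is correct and is one of the standard routes to the weak Young inequality. The level-set splitting $g=g_{\leq s}+g_{>s}$, the estimates $\norme{g_{\leq s}}_{p'}\lesssim s^{1-q/p'}$ and $\norme{g_{>s}}_b\lesssim s^{1-q/b}$, and the identification $1-q/p'=q/r$ are all right, and the exponent bookkeeping in the distribution-function bound does collapse to $\alpha^{-r}$ as you claim (using $1/r_b=1/r+1/b-1/q$). The final Marcinkiewicz step is legitimate because for fixed $g$ the map $f\mapsto f\star g$ is linear and, along the line $1/r=1/p+1/q-1$, one always has $p<r$, so the diagonal hypothesis of Marcinkiewicz is satisfied at two nearby endpoints bracketing the target $p$. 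The reference \cite{LL97} proves the result by a closely related but not identical argument (a layer-cake/rearrangement computation leading directly to the strong bound rather than passing through a weak-type estimate plus interpolation); your approach trades that direct computation for the interpolation machinery, which is perhaps conceptually cleaner but yields a worse (non-sharp) constant. Either way, for the purposes of this paper the lemma is treated as background, and your sketch would be a perfectly acceptable substitute for the citation.
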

The next three lemmas and their proofs can be found in \cite{R05}. It is easy to show that the estimates given in \cref{lem:estE} are also true in our case. \cref{lem:ineq_mk} is a fundamental velocity moment inequality  and \cref{lem:lineq_1} is a basic functional inequality.
\begin{lemma}\label{lem:estE}
	The estimate
	
	\begin{equation}\label{est_E}
	\norme{E(t)}_p \leq C, t \in \intervallefo{0}{T}
	\end{equation}
	holds for $p\in \intervalleof{\frac{3}{2}}{\frac{15}{4}}$ with the constant $C=C(\norme{f^{in}}_1,\norme{f^{in}}_\infty,\mathcal{E}_{in})$ independent of $p$, so that we also have the estimate
	\begin{equation}
	\norme{E(t)}_{\frac{3}{2},w}\leq C, t \in \intervallefo{0}{T}.
	\end{equation}
\end{lemma}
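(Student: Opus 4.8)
The plan is to reduce the statement, exactly as in the unmagnetized case of \cite{R05}, to a uniform-in-time $L^q$ bound on $\rho(t)$, and then to check that the three conservation properties on which that bound rests survive the addition of the magnetic term $v\wedge B\cdot\nabla_v f$. Since for the solution under consideration $E(t,\cdot)$ solves $\text{div}_x E=\rho(t,\cdot)$, $\text{curl}_x E=0$, it is the convolution $E(t,\cdot)=c_3\,\frac{\cdot}{\abs{\cdot}^3}\star\rho(t,\cdot)$ against the Newtonian kernel, which belongs to $L^{3/2,w}(\R^3)$; so everything comes down to bounding $\rho(t)$. First, $\text{div}_v(v\wedge B)=0$, hence the transport field $(v,\,E+v\wedge B)$ is divergence-free in $(x,v)$, the characteristic flow preserves Lebesgue measure, and $\norme{f(t)}_1=\norme{f^{in}}_1$, $\norme{f(t)}_\infty=\norme{f^{in}}_\infty$. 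Second, multiplying the Vlasov equation by $\abs{v}^2$ and integrating in $(x,v)$, the magnetic contribution to the energy balance is
\begin{equation*}
\iint_{\R^3\times\R^3}\abs{v}^2\,(v\wedge B)\cdot\nabla_v f\,dxdv = -2\iint_{\R^3\times\R^3} v\cdot(v\wedge B)\,f\,dxdv = 0,
\end{equation*}
because $v\perp v\wedge B$; thus the energy identity is exactly the one without magnetic field and yields $\mathcal E(t)\le\mathcal E_{in}$, hence $M_2(f(t))\le 2\mathcal E_{in}$, for $t\in\intervalleff{0}{T}$.

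With these three bounds the rest is the classical interpolation argument. Splitting $\rho(t,x)=\int_{\abs{v}\le R}f\,dv+\int_{\abs{v}>R}f\,dv\le\frac{4\pi}{3}\norme{f^{in}}_\infty R^3+R^{-2}m_2(f)(t,x)$ and optimizing over $R>0$ gives the pointwise bound $\rho(t,x)\le C\norme{f^{in}}_\infty^{2/5}m_2(f)(t,x)^{3/5}$, whence
\begin{equation*}
\norme{\rho(t)}_{5/3}\le C\norme{f^{in}}_\infty^{2/5}M_2(f(t))^{3/5}\le C(\norme{f^{in}}_\infty,\mathcal E_{in}),
\end{equation*}
and interpolating with $\norme{\rho(t)}_1=\norme{f^{in}}_1$ gives $\norme{\rho(t)}_q\le C(\norme{f^{in}}_1,\norme{f^{in}}_\infty,\mathcal E_{in})$ for every $q\in\intervalleff{1}{5/3}$.

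Finally, the weak Young inequality \cref{w_Young}, applied with $g$ the Newtonian kernel in $L^{3/2,w}(\R^3)$, gives $\norme{E(t)}_p\le c\,\norme{g}_{3/2,w}\norme{\rho(t)}_q\le C$ whenever $\frac1q+\frac23=1+\frac1p$; as $q$ runs over $\intervalleof{1}{5/3}$ this is precisely $p\in\intervalleof{3/2}{15/4}$, which is \cref{est_E}. Since the weak Young constant degenerates as $p\downarrow 3/2$, to get a constant independent of $p$ on the whole range one instead splits $g=g\mathds{1}_{\abs{x}<1}+g\mathds{1}_{\abs{x}\ge 1}$, with $g\mathds{1}_{\abs{x}<1}\in L^r$ for every $r<3/2$ and $g\mathds{1}_{\abs{x}\ge1}\in L^r$ for every $r>3/2$, applies the ordinary Young inequality against $\rho(t)\in L^1\cap L^{5/3}$ (only finitely many exponents occur, so the largest explicit constant is uniform), and handles the same decomposition in the weak-$L^{3/2}$ scale via $L^1\star L^{3/2,w}\hookrightarrow L^{3/2,w}$ to obtain $\norme{E(t)}_{3/2,w}\le C$; alternatively one simply quotes \cite{R05} verbatim, its proof using nothing beyond the three conservation facts checked above. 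Note that neither $\omega$ nor $T$ enters $C$, as the statement claims. The only genuinely new ingredient, and the place where I would expect any (mild) difficulty, is the verification that $v\wedge B$ is divergence-free in $v$ and does no work, so that mass, $L^\infty$ norm and energy are controlled exactly as in the field-free setting.
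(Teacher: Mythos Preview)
Your proof is correct and follows exactly the approach the paper indicates: the paper does not give its own proof of \cref{lem:estE} but simply cites \cite{R05} and remarks that ``it is easy to show that the estimates given in \cref{lem:estE} are also true in our case,'' which is precisely what you have verified by checking that $\text{div}_v(v\wedge B)=0$ and $v\cdot(v\wedge B)=0$ so that the $L^1$, $L^\infty$ and energy bounds survive unchanged. Your reconstruction of the interpolation and weak Young steps matches the standard argument in \cite{R05}, so there is nothing to add.
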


\begin{lemma}\label{lem:ineq_mk}	
	Let $1 \leq p,q \leq \infty$ with $\frac{1}{p}+\frac{1}{q}=1$, $0\leq k'\leq k < \infty$ and $r=\frac{k+\frac{3}{q}}{k'+\frac{3}{q}+\frac{k-k'}{p}}$. 
	If $f\in L^p_+(\R^6)$ with $M_k(f) < \infty$ then $m_{k'}(f)\in L^r(\R^3)$ and 
	
	\begin{equation}
	\norme{m_{k'}(f)}_r \leq c \norme{f}_p^{\frac{k-k'}{k+\frac{3}{q}}}M_k(f)^{\frac{k'+\frac{3}{q}}{k+\frac{3}{q}}}
	\end{equation} where $c=c(k,k',p)>0$.
\end{lemma}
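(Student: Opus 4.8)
The plan is to obtain the estimate in two stages: first a pointwise-in-$x$ interpolation inequality for $m_{k'}(f)(x)$, and then an integration in $x$ via H\"older's inequality, the exponent $r$ in the statement being precisely the one for which the second stage closes.

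For the pointwise stage, I would fix $x$ and a radius $R>0$ (to be chosen later, possibly depending on $x$) and split
\[
m_{k'}(f)(x)=\int_{\abs{v}\le R}\abs{v}^{k'}f(x,v)\,dv+\int_{\abs{v}>R}\abs{v}^{k'}f(x,v)\,dv .
\]
On the first term I apply H\"older in $v$ with exponents $p,q$ together with $\int_{\abs{v}\le R}\abs{v}^{k'q}\,dv=cR^{k'q+3}$, which gives $\int_{\abs{v}\le R}\abs{v}^{k'}f(x,v)\,dv\le c\,\norme{f(x,\cdot)}_{L^p_v}R^{k'+\frac{3}{q}}$ (with the obvious modification $c\,\norme{f(x,\cdot)}_{L^\infty_v}R^{k'+3}$ when $q=1$). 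On the second term I use $\abs{v}^{k'}=\abs{v}^{k'-k}\abs{v}^{k}\le R^{k'-k}\abs{v}^{k}$, so that $\int_{\abs{v}>R}\abs{v}^{k'}f(x,v)\,dv\le R^{k'-k}m_k(f)(x)$. Writing $A=c\,\norme{f(x,\cdot)}_{L^p_v}$ and $B=m_k(f)(x)$, adding the two bounds and minimizing $AR^{k'+3/q}+BR^{-(k-k')}$ over $R>0$ (the interior minimum occurs when $R^{k+3/q}$ is proportional to $B/A$) yields
\[
m_{k'}(f)(x)\le c\,\norme{f(x,\cdot)}_{L^p_v}^{\frac{k-k'}{k+3/q}}\,m_k(f)(x)^{\frac{k'+3/q}{k+3/q}} .
\]

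For the global stage, I would raise this inequality to the power $r$, integrate in $x$, and apply H\"older with conjugate exponents $\alpha,\alpha'$ chosen so that the factor $\norme{f(x,\cdot)}_{L^p_v}$ appears raised to the power $p$ (its $x$-integral then being $\norme{f}_p^{p}$) and the factor $m_k(f)(x)$ appears raised to the power $1$ (its $x$-integral then being $M_k(f)$). The requirements $r\frac{k-k'}{k+3/q}\alpha=p$, $r\frac{k'+3/q}{k+3/q}\alpha'=1$ and $\frac1\alpha+\frac1{\alpha'}=1$ force exactly $r=\big(k+\tfrac3q\big)\big/\big(k'+\tfrac3q+\tfrac{k-k'}{p}\big)$, and taking the $r$-th root produces the claimed bound with a constant $c=c(k,k',p)$.

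I do not expect a genuine obstacle here: this is a standard Lebesgue-space interpolation, and the lemma is in fact quoted from \cite{R05}. The points that need a little care — the closest thing to a difficulty — are checking that the H\"older exponents $\alpha,\alpha'$ so obtained are at least $1$, equivalently $1\le r\le p$, which follows from $0\le k'\le k$ and $p\ge1$; and treating the endpoint cases $p\in\{1,\infty\}$, where the H\"older step in $v$ collapses to a trivial bound, and $k'\in\{0,k\}$, where the velocity split becomes effectively one-sided and the minimization in $R$ is attained in the limit. In all of these the computation goes through with only cosmetic modifications, so I would keep the write-up short and refer to \cite{R05} for the remaining details.
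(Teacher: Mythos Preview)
Your argument is correct and is exactly the standard split-and-optimize proof that the reference \cite{R05} gives; the paper itself does not reproduce a proof of this lemma but simply cites \cite{R05}. There is nothing to add beyond noting that the endpoint checks you flag ($p\in\{1,\infty\}$, $k'\in\{0,k\}$) indeed go through without change.
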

\begin{lemma}\label{lem:lineq_1}
	For all functions $g \in L^1\cap L^\infty(\R^3)$ and $h \in L^{\frac{3}{2},w}(\R^3)$,
	\begin{equation}\label{ineq_1}
	\int_{\R^3} \abs{gh}dx \leq 3\left(\frac{3}{2}\right)^\frac{2}{3}\norme{g}_1^\frac{1}{3}\norme{g}_\infty^\frac{2}{3}\norme{h}_{\frac{3}{2},w}
	\end{equation}
\end{lemma}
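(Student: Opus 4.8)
The plan is to cut $h$ at a threshold level $\tau>0$, writing
$$\int_{\R^3}\abs{gh}dx = \int_{\{\abs{h}\le\tau\}}\abs{gh}dx + \int_{\{\abs{h}>\tau\}}\abs{gh}dx,$$
and to estimate the two pieces by the two pieces of information available on $g$: the $L^1$ norm on the low part and the $L^\infty$ norm on the high part. On $\{\abs{h}\le\tau\}$ I bound $\abs{h}\le\tau$ pointwise, so this piece is at most $\tau\norme{g}_1$. On $\{\abs{h}>\tau\}$ I bound $\abs{g}\le\norme{g}_\infty$ pointwise, so this piece is at most $\norme{g}_\infty\int_{\{\abs{h}>\tau\}}\abs{h}dx$. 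Optimizing the resulting bound over $\tau$ will produce the stated constant.

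The key quantitative input is a weak bound on the super-level sets of $h$ drawn from the norm definition. First I would record that for every $\tau>0$ the set $A_\tau:=\{\abs{h}>\tau\}$ has finite measure and satisfies $\abs{A_\tau}\le\left(\norme{h}_{\frac{3}{2},w}/\tau\right)^{3/2}$: applying the definition of $\norme{\cdot}_{\frac{3}{2},w}$ with $A=A_\tau$ gives $\tau\abs{A_\tau}\le\int_{A_\tau}\abs{h}dx\le\norme{h}_{\frac{3}{2},w}\abs{A_\tau}^{1/3}$, hence $\tau\abs{A_\tau}^{2/3}\le\norme{h}_{\frac{3}{2},w}$ (finiteness of $\abs{A_\tau}$ is obtained the same way by first testing on arbitrary finite-measure subsets and letting their measure grow). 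Then, combining the layer-cake formula with this bound,
\begin{equation*}
\int_{\{\abs{h}>\tau\}}\abs{h}dx = \tau\abs{A_\tau} + \int_\tau^\infty\abs{A_s}ds \le \norme{h}_{\frac{3}{2},w}^{3/2}\tau^{-1/2} + \int_\tau^\infty\norme{h}_{\frac{3}{2},w}^{3/2}s^{-3/2}ds = 3\norme{h}_{\frac{3}{2},w}^{3/2}\tau^{-1/2}.
\end{equation*}

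Combining the two pieces gives $\int_{\R^3}\abs{gh}dx\le\tau\norme{g}_1 + 3\norme{g}_\infty\norme{h}_{\frac{3}{2},w}^{3/2}\tau^{-1/2}$ for every $\tau>0$. I would then minimize the right-hand side in $\tau$; the function $\tau\mapsto a\tau + b\tau^{-1/2}$ attains its minimum $3\cdot2^{-2/3}a^{1/3}b^{2/3}$ at $\tau=(b/2a)^{2/3}$. Taking $a=\norme{g}_1$ and $b=3\norme{g}_\infty\norme{h}_{\frac{3}{2},w}^{3/2}$ yields precisely
\begin{equation*}
\int_{\R^3}\abs{gh}dx \le 3\cdot2^{-2/3}\norme{g}_1^{1/3}\left(3\norme{g}_\infty\norme{h}_{\frac{3}{2},w}^{3/2}\right)^{2/3} = 3\left(\frac{3}{2}\right)^{2/3}\norme{g}_1^{1/3}\norme{g}_\infty^{2/3}\norme{h}_{\frac{3}{2},w},
\end{equation*}
which is the claimed inequality. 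There is no serious obstacle here, since the argument is an elementary splitting-and-optimization; the only point requiring care is bookkeeping the numerical constant. In particular, the factor $3$ in the layer-cake estimate of $\int_{\{\abs{h}>\tau\}}\abs{h}dx$ must be combined with the factor from the one-variable minimization to recover exactly $3(3/2)^{2/3}=3^{5/3}2^{-2/3}$, so these constants should not be discarded along the way.
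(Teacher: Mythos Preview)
Your argument is correct and is essentially the standard proof of this inequality: split at a level $\tau$, use the weak-$L^{3/2}$ norm to control $\int_{\{\abs{h}>\tau\}}\abs{h}\,dx$ via the layer-cake formula, then optimize in $\tau$. The paper does not give its own proof of this lemma but refers to \cite{R05}, where precisely this level-set splitting and optimization is carried out; your bookkeeping of the constant $3(3/2)^{2/3}$ matches.
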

Lastly we give a Calder\'on-Zygmund inequality, whose proof one can find in \cite{D01}. 
\begin{lemma}[Calder\'on-Zygmund]
	\label{cal_zy}
	
	If $\Omega \in L^q(\mathcal{S}^{d-1})$, $q>1$ so that $\int_{\mathcal{S}^{d-1}} \Omega(\omega) dS(\omega)=0$, we consider the tempered distribution $T= \text{\rm vp} \frac{\Omega\left(\frac{x}{\abs{x}}\right)}{\abs{x}^d} \in \mathcal{S}'(\R^d)$. The operator $\phi \in \mathcal{D}(\R^d) \mapsto T\star \phi$ can be uniquely extended into a bounded operator on $L^p(\R^d)$ for $p \in \intervalleoo{1}{\infty}$.
\end{lemma}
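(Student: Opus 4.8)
The plan is to recover the classical Calder\'on--Zygmund theory for homogeneous singular kernels. Write $K(x):=\Omega(x/\abs{x})/\abs{x}^d$, which is homogeneous of degree $-d$, and for $\epsilon>0$ introduce the truncated operators
\[
T_\epsilon \phi(x):=\int_{\abs{y}>\epsilon} K(y)\,\phi(x-y)\,dy,
\]
so that $T\star\phi=\lim_{\epsilon\to 0}T_\epsilon\phi$ in the principal value sense. Since $\Omega\in L^q(\mathcal{S}^{d-1})\subset L^1(\mathcal{S}^{d-1})$ for $q>1$, the mean-zero hypothesis $\int_{\mathcal{S}^{d-1}}\Omega\,dS=0$ is exactly what makes the singular part cancel: splitting $T_\epsilon\phi$ near the origin and using $\int_{\epsilon<\abs{y}<1}K(y)\,dy=0$ together with $\abs{\phi(x-y)-\phi(x)}\leq C\abs{y}$ shows the limit exists for $\phi\in\mathcal{D}(\R^d)$. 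The strategy is then the usual two-step scheme: prove boundedness on $L^2$ by the Fourier transform, then upgrade to all $p\in\intervalleoo{1}{\infty}$ via the Calder\'on--Zygmund decomposition and interpolation.

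For the $L^2$ step I would show that convolution with $T$ is a Fourier multiplier operator with bounded symbol $m=\hat T$. Because $K$ is homogeneous of degree $-d$ and has mean zero, its distributional Fourier transform is homogeneous of degree $0$, so $m(\xi)=m_0(\xi/\abs{\xi})$ where, for a unit vector $\eta$,
\[
m_0(\eta)=\int_{\mathcal{S}^{d-1}}\Omega(\theta)\left(-\frac{i\pi}{2}\,\mathrm{sgn}(\theta\cdot\eta)+\log\frac{1}{\abs{\theta\cdot\eta}}\right)dS(\theta);
\]
the mean-zero condition kills the term proportional to $\log\abs{\xi}$ (ensuring degree $0$) and, combined with $\Omega\in L^q$, guarantees $\sup_\xi\abs{m(\xi)}<\infty$. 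By Plancherel this yields $\norme{T\star\phi}_2\leq \norme{m}_\infty\,\norme{\phi}_2$.

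To reach $p\in\intervalleoo{1}{\infty}$ I would verify that convolution with $K$ is a Calder\'on--Zygmund operator in the rough sense: it is bounded on $L^2$ (the step above) and its kernel satisfies the integrated H\"ormander regularity condition
\[
\sup_{y\neq 0}\int_{\abs{x}>2\abs{y}}\abs{K(x-y)-K(x)}\,dx<\infty.
\]
This is proved by separating the increment of $K$ into a radial part and an angular part and using that $\Omega\in L^q(\mathcal{S}^{d-1})$ with $q>1$ controls the averaged angular oscillation, through a Minkowski-inequality and rotation argument. Feeding the $L^2$ bound together with this H\"ormander estimate into the Calder\'on--Zygmund decomposition of an $L^1$ function yields the weak-type $(1,1)$ inequality for $\phi\mapsto T\star\phi$.

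Finally, Marcinkiewicz interpolation between the weak $(1,1)$ bound and the strong $L^2$ bound gives boundedness on $L^p$ for $1<p\leq 2$; the range $2\leq p<\infty$ follows by duality, since the formal adjoint is convolution with the reflected kernel $x\mapsto\overline{K(-x)}$, whose angular part $\theta\mapsto\overline{\Omega(-\theta)}$ again lies in $L^q(\mathcal{S}^{d-1})$ with vanishing mean, so the same argument applies. A density argument then extends the a priori estimate from $\phi\in\mathcal{D}(\R^d)$ to all of $L^p(\R^d)$ and shows the extension is unique. The main obstacle is the roughness of $\Omega$: since it is only assumed $L^q$ and not Dini-continuous, the smoothness half of the H\"ormander condition cannot be obtained pointwise from a gradient bound on $K$ and must instead be established in the integrated form, which is the one point where the hypothesis $q>1$ is genuinely used.
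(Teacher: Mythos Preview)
The paper does not actually prove this lemma; it merely states it and defers to the reference \cite{D01} for the proof. Your outline follows the standard Calder\'on--Zygmund scheme, and the $L^2$ step via the homogeneous degree-zero multiplier, together with the duality and density arguments, is correct.

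The one genuine gap is your Step 3. For $\Omega$ that is only in $L^q(\mathcal{S}^{d-1})$, the integrated H\"ormander condition on $K(x)=\Omega(x/\abs{x})/\abs{x}^d$ need \emph{not} hold. If you split $K(x-y)-K(x)$ into a radial increment and an angular increment and integrate over $\abs{x}>2\abs{y}$, the angular contribution reduces (after passing to polar coordinates and substituting $s=\abs{y}/r$) to $\int_0^{1/2}\omega_1(s)\,\frac{ds}{s}$, where $\omega_1$ is the $L^1$ modulus of continuity of $\Omega$ on the sphere. This is precisely the $L^1$-Dini condition, which is strictly stronger than $\Omega\in L^q$: one can construct $\Omega\in L^q$ with $\omega_1(s)\sim(\log\tfrac{1}{s})^{-1}$, for which the Dini integral diverges. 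So the route ``H\"ormander condition $\Rightarrow$ weak-$(1,1)$ $\Rightarrow$ Marcinkiewicz'' is not available for rough $\Omega$, and your Minkowski/rotation sketch does not rescue it.

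The classical argument for $\Omega\in L^q$ with $q>1$ is instead the Calder\'on--Zygmund \emph{method of rotations}: decompose $\Omega=\Omega_{\mathrm{odd}}+\Omega_{\mathrm{even}}$. For the odd part, write the operator as $\tfrac{1}{2}\int_{\mathcal{S}^{d-1}}\Omega_{\mathrm{odd}}(\theta)\,H_\theta\,dS(\theta)$, an average of directional Hilbert transforms, and apply Minkowski's integral inequality together with the uniform bound $\norme{H_\theta}_{L^p\to L^p}\leq C_p$; here $\Omega\in L^1$ already suffices. For the even part, compose with Riesz transforms to reduce to singular integrals with odd kernels, and it is in controlling these new kernels on the sphere that the hypothesis $q>1$ genuinely enters. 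This gives $L^p$-boundedness for all $1<p<\infty$ directly, without ever passing through weak-$(1,1)$.
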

\section{Proof of propagation of moments}\label{sec:proofmain}
As said above, we extend the main result of \cite{LP91} to the case of Vlasov-Poisson with a homogeneous magnetic field. However, here, we use the same steps for the proof as in \cite{R05}, where the ideas of \cite{LP91} are presented.

We begin by considering $k_0,T$ and $f^{in}$ that follow the assumptions of \cref{theo:main}. Then, as in \cite{R05}, we can write a differential inequality on $M_k$, with $0\leq k \leq k_0$.

We differentiate $M_k$, and by integration by parts, a H\"older inequality and lemma \eqref{lem:ineq_mk} with $p=\infty, q=1, k'=k-1$, we obtain,

\begin{align*}
\abs{\frac{d}{dt} M_k(t)} & =	\abs{\iint \abs{v}^k (-v\cdot\nabla_{x}f-(E+v\wedge B)\cdot \nabla_{v} f) dv dx}\\
& = \abs{\iint \abs{v}^k \mathop{\rm div_v}\left((E+v\wedge B)f\right)dvdx}\\
& = \abs{\iint k\abs{v}^{k-2} v\cdot E f dv dx}\\
& \leq \iint k\abs{v}^{k-1} f dv \abs{E} dx\\
& \leq k \norme{E(t)}_{k+3}\norme{m_{k-1}(f)}_{\frac{k+3}{k+2}}
\end{align*}
and finally
\begin{equation}\label{eq:diff_Mk}
\abs{\frac{d}{dt} M_k(t)}\leq C \norme{E(t)}_{k+3}M_k(t)^{\frac{k+2}{k+3}}
\end{equation}
with $C=c(k)\norme{f(t)}_\infty^\frac{1}{k+3}=C(k,\norme{f^{in}}_\infty)$. The computations above are almost the same as in the original case because the magnetic part vanishes. This means that, like in the unmagnetized case, we need to control $\norme{E(t)}_{k+3}$ to obtain a Gr\"onwall inequality on $M_k$.
\subsection{A representation formula for $\rho$}\label{ssec:reprho}
Now we turn to the next step of the proof. Following \cite{R05}, we write a representation formula for the macroscopic density using the characteristics associated to the Vlasov equation. With the added magnetic field, the characteristics are much more complicated than in the unmagnetized case. This translates to a generalized representation formula for the macroscopic density.
\begin{lemma}\label{lem_rep_rho}
We have the following representation formula for $\rho$, 
\begin{equation}\label{rep_rho}
\rho(t,x)=\underset{=:\rho_0(t,x)}{\underbrace{\int_v f^{in}(X^0(t),V^0(t))dv}}+ \text{\rm div}_x \int_0^t\int_v \left(fH_t\right)\left(s,X(s;t,x,v),V(s;t,x,v)\right)dvds
\end{equation}
with $\left(X(s;t,x,v),V(s;t,x,v)\right)$ the characteristics associated to the Vlasov equation of system \cref{sys:VPwB}, given by
\begin{equation}\label{chara}
\left\{
\begin{aligned}
& V(s;t,x,v)=  \begin{pmatrix}
v_1\cos(\omega (s-t))+v_2\sin(\omega (s-t))\\
-v_1\sin(\omega (s-t))+v_2\cos(\omega (s-t))\\
v_3
\end{pmatrix}\\
& X(s;t,x,v)=\begin{pmatrix}
x_1+\frac{v_1}{\omega}\sin(\omega (s-t))+\frac{v_2}{\omega}(1-\cos(\omega (s-t)))\\
x_2+\frac{v_1}{\omega}(\cos(\omega (s-t))-1)+\frac{v_2}{\omega}\sin(\omega (s-t))\\
x_3+v_3(s-t))
\end{pmatrix}
\end{aligned}
\right.
\end{equation}
with $(X^0(t),V^0(t))=(X(0;t,x,v),V(0;t,x,v))$ and 
\begin{equation}\label{express_H}
H_t\left(s,x\right)=
\begin{pmatrix}
\frac{\sin(\omega (s-t))}{\omega} E_1(s,x)+\frac{\cos(\omega (s-t))-1}{\omega}E_2(s,x)\\
\frac{1-\cos(\omega (s-t))}{\omega} E_1(s,x)+\frac{\sin(\omega (s-t))}{\omega}E_2(s,x)\\
(s-t)E_3(s,x)
\end{pmatrix}
\end{equation}
with $E_i$ the coordinates of the electric field $E$.
\end{lemma}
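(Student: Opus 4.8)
The plan is to run the method of characteristics for \eqref{sys:VPwB}, but along the \emph{free} (purely magnetic) flow \eqref{chara} rather than the full one, so that the electric force appears as a source term which is afterwards rewritten as a divergence in $x$. Fix $(t,x,v)$ and set $g(s):=f\big(s,X(s;t,x,v),V(s;t,x,v)\big)$. Since \eqref{chara} solves $\partial_s X=V$, $\partial_s V=V\wedge B$ with $(X,V)|_{s=t}=(x,v)$, the Vlasov equation evaluated along these characteristics gives $g'(s)=-\big(E\cdot\nabla_v f\big)\big(s,X(s;t,x,v),V(s;t,x,v)\big)$; integrating in $s$ over $[0,t]$ and then in $v$ yields
\[
\rho(t,x)=\rho_0(t,x)-\int_0^t\int_v \big(E\cdot\nabla_v f\big)\big(s,X(s;t,x,v),V(s;t,x,v)\big)\,dv\,ds,
\]
with $\rho_0$ as in \eqref{rep_rho}. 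Everything then reduces to showing that this last integral equals $-\,\mathrm{div}_x\int_0^t\int_v (fH_t)(s,X(s;t,x,v),V(s;t,x,v))\,dv\,ds$.

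To this end I would fix $s\in[0,t]$ and exploit the explicit affine structure of \eqref{chara}: writing $\theta:=\omega(s-t)$, the map $v\mapsto V(s;t,x,v)$ is a volume-preserving rotation $\mathcal R_\theta$ about the third axis, and $X(s;t,x,v)=x+\mathcal A_{s,t}\,V(s;t,x,v)$, where $\mathcal A_{s,t}:=D_vX(s)\,\mathcal R_\theta^{-1}$ is a constant matrix for which one checks \emph{directly} that $\mathcal A_{s,t}\,E(s,x)=H_t(s,x)$ for all $x$, i.e. $H_t$ is exactly $\mathcal A_{s,t}$ applied to $E$. Substituting $w=V(s;t,x,v)$ (Jacobian $1$) and applying the chain rule in the form $(\nabla_v f)(s,x+\mathcal A_{s,t}w,w)=\nabla_w\big[f(s,x+\mathcal A_{s,t}w,w)\big]-\mathcal A_{s,t}^{\,T}\nabla_x\big[f(s,x+\mathcal A_{s,t}w,w)\big]$ (the last term because $x\mapsto X(s;t,x,v)$ is a translation), I would integrate the $\nabla_w$-term by parts in $w$, the boundary terms vanishing by the decay of $f$ at large velocities. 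The two remaining pieces then recombine, using $\mathcal A_{s,t}E=H_t$ and the companion identity $\mathrm{div}_w\big[E(s,x+\mathcal A_{s,t}w)\big]=(\mathrm{div}_xH_t)(s,x+\mathcal A_{s,t}w)$, into $-\,\mathrm{div}_x\int_w (H_tf)(s,x+\mathcal A_{s,t}w,w)\,dw$. Undoing the substitution, integrating over $s\in[0,t]$, and inserting the result into the identity of the first paragraph gives \eqref{rep_rho}.

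I expect the main obstacle to be the linear-algebra bookkeeping of the second paragraph: verifying that $\mathcal A_{s,t}=D_vX(s)\,\mathcal R_\theta^{-1}$ has precisely the entries for which $\mathcal A_{s,t}E=H_t$ — this is exactly what forces the somewhat unusual expression \eqref{express_H}, with $E_1$ and $E_2$ mixed in its first two components — together with the divergence identity used in the recombination. A secondary, routine point is rigour: the differentiations under the integral sign and the integration by parts require $f$ to be smooth with enough velocity decay, so the computation is first carried out on the regularised solutions used to construct the weak solution in \cref{theo:main} and then passed to the limit.
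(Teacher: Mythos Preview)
Your proof is correct and follows the same overall strategy as the paper: solve the Vlasov equation along the free magnetic characteristics \eqref{chara}, obtain a Duhamel representation with source $-(E\cdot\nabla_v f)$, and then convert that source term, after integration in $v$, into an $x$-divergence of $fH_t$.

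The execution differs in an instructive way. The paper introduces an auxiliary field $G_t(s,x)=\mathcal R_\theta^{-1}E(s,x)$ and verifies, by a brute-force component computation, the identity
\[
\text{div}_v\!\big[(fG_t)(s,X,V)\big]=\big(\text{div}_v(fE)\big)(s,X,V)+\text{div}_x\!\big[(fH_t)(s,X,V)\big],
\]
after which integrating in $v$ kills the left-hand side. Your argument instead performs the change of variables $w=V=\mathcal R_\theta v$ first, so that $X=x+\mathcal A_{s,t}w$, and then a clean chain-rule/integration-by-parts in $w$ does the job; the key structural fact $H_t=\mathcal A_{s,t}E$ with $\mathcal A_{s,t}=D_vX\,\mathcal R_\theta^{-1}$ is made explicit, which both explains the form of \eqref{express_H} and replaces the paper's long component expansion by two short matrix identities. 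The two computations are dual to one another (your $w$-integration by parts is exactly what makes the paper's $\text{div}_v$ term vanish), so nothing new is being proved, but your organisation is more transparent about \emph{why} $H_t$ looks the way it does.
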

\begin{proof}
Firstly, thanks to the Vlasov equation, which we see as a transport equation in $x$ and $v$ with source term $-E\cdot\partial_v f$, we can express $f$ by solving the characteristics and by applying the Duhamel formula
\begin{equation*}
f(t,x,v)=f^{in}(X^0(t),V^0(t))-\int_0^t \text{\rm div}_v(fE)(s,X(s;t,x,v),V(s;t,x,v))ds
\end{equation*}
where $\left(X(\cdot,t,x,v),V(\cdot,t,x,v)\right)$ is the solution to
\begin{equation*}
\left\{
\begin{aligned}
& \frac{d}{ds}\left(X(s;t,x,v),V(s;t,x,v)\right)=\left(V(s;t,x,v),\omega V_2(s;t,x,v),-\omega V_1(s;t,x,v),0 \right)\\
& \left(X(t;t,x,v),V(t;t,x,v)\right)=(x,v),
\end{aligned}
\right.
\end{equation*}
hence the expressions in \eqref{chara}. Now if we consider
\begin{equation*}
G_t(s,x)=
\begin{pmatrix}
\cos(\omega (s-t))E_1(s,x)-\sin(\omega (s-t))E_2(s,x)\\
\sin(\omega (s-t))E_1(s,x)+\cos(\omega (s-t))E_2(s,x)\\
E_3(s,x)
\end{pmatrix}
\end{equation*}
then
\begin{align*}
& \text{\rm div}_v\int_0^t fG_t(s,X(s;t,x,v),V(s;t,x,v)) ds\\
& =\int_0^t \cos(\omega(s-t)) \partial_{v_1}\left(fE_1\left(s,X(s;t,x,v),V(s;t,x,v)\right)\right)\\
& -\int_0^t\sin(\omega(s-t)) \partial_{v_1}\left(fE_2\left(s,X(s;t,x,v),V(s;t,x,v)\right)\right)\\ & +\int_0^t\sin(\omega(s-t))\partial_{v_2}\left(fE_1\left(s,X(s;t,x,v),V(s;t,x,v)\right)\right)\\
& +\int_0^t\cos(\omega(s-t)) \partial_{v_2}\left(fE_2\left(s,X(s;t,x,v),V(s;t,x,v)\right)\right)\\
& +\int_0^t \partial_{v_3}\left(fE_3\left(s,X(s;t,x,v),V(s;t,x,v)\right)\right)\\
& =\int_0^t \frac{\cos \sin}{\omega}\partial_{x_1}(fE_1)+\frac{\cos (\cos-1)}{\omega}\partial_{x_2}(fE_1)+\cos^2\partial_{v_1}(fE_1)-\cos\sin\partial_{v_2}(fE_1)\\
& +\int_0^t -\frac{\sin^2}{\omega}\partial_{x_1}(fE_2)+\frac{\sin (1-\cos)}{\omega}\partial_{x_2}(fE_2)-\cos\sin\partial_{v_1}(fE_2)+\sin^2\partial_{v_2}(fE_2)\\
& +\int_0^t \frac{(1-\cos) \sin}{\omega}\partial_{x_1}(fE_1)+\frac{\sin^2}{\omega}\partial_{x_2}(fE_1)+\sin^2\partial_{v_1}(fE_1)+\cos\sin\partial_{v_2}(fE_1)\\
& +\int_0^t \frac{\cos (1-\cos)}{\omega}\partial_{x_1}(fE_2)+\frac{\cos \sin}{\omega}\partial_{x_2}(fE_2)+\cos\sin\partial_{v_1}(fE_2)+\cos^2\partial_{v_2}(fE_2)\\
&+\int_{0}^{t}(s-t)\partial_{x_3}(fE_3)+\partial_{v_3}(fE_3)\\
& =\int_{0}^{t} \text{\rm div}_v(fE)(s,X(s;t,x,v),V(s;t,x,v))ds\\ 
& + \text{\rm div}_x\int_{0}^{t}(fH_t)(s,X(s;t,x,v),V(s;t,x,v))ds
\end{align*}
Where in the second to last equality, $\cos=\cos(\omega(s-t))$ (same for $\sin$) and $\partial_{x_i}(fE_i)$ is always evaluated at $\left(s,X(s;t,x,v),V(s;t,x,v)\right)$ (same for $\partial_{v_i}(fE_i)$).
Then we integrate with respect to $v$ which gives us \eqref{rep_rho}.
\end{proof}
\begin{remark}
	The expression of $H_t$ and the characteristics are coherent because $H_t \underset{\omega \rightarrow 0}{\longrightarrow} -t E$ and $(X^0,V^0)\underset{\omega \rightarrow 0}{\longrightarrow}(x-tv,v)$. These expressions obtained when $\omega \rightarrow 0$ correspond to the representation formula for $\rho$ in the unmagnetized case.
\end{remark}

\subsection{Control of the electric field with the characteristics}\label{ssec:controlelec}
Thanks to \cref{lem_rep_rho} which gives us a new representation formula for $\rho$, we can start to write the estimates to control the electric field, still following the steps from \cite{R05}. A first difficulty here is adapting the estimates to this new context. We also see the appearance of the singularities mentioned above at \cref{est_larget}, which will be a major difficulty.
\subsubsection{First estimates}

Thanks to the representation formula \eqref{rep_rho} for $\rho$, $E(t,\cdot)$ is given by
\begin{equation}
E(t,x)=-\left(\nabla K_3 \star \rho\right)(t,x)= E^0(t,x)+\tilde{E}(t,x)
\end{equation}
where $K_3$ is Green's function for the Laplacian in dimension 3 given by
\begin{equation}\label{Green}
K_3(x)=\frac{1}{4\pi}\frac{1}{\abs{x}}
\end{equation}
and
\begin{equation}
\left\{
\begin{aligned}
E^0(t,x)&=-\left(\nabla K_3 \star \rho_0\right)(t,x)\\
\tilde{E}(t,x)&=-\nabla K_3 \star \left(\text{\rm div}_x \int_0^t\int_v \left(fH_t\right)\left(s,X(s;t,x,v),V(s;t,x,v)\right)dvds\right)
\end{aligned}
\right.
\end{equation}
The first term $E^0$ is easier to control.
\begin{lemma}
We have the following estimate for $E^0$.
\begin{equation}\label{ineq:E0}
\norme{E^0(t,\cdot)}_{k+3} \leq C(k,\norme{f^{in}}_1,M_k(f^{in}))
\end{equation}
\end{lemma}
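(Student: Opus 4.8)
The plan is to reduce the bound on $E^0$ to standard moment estimates for the \emph{initial} datum, using the fact that $\rho_0(t,\cdot)$ is exactly the zeroth velocity moment of the density obtained by transporting $f^{in}$ along the free flow (i.e.\ the flow with the Lorentz force but no self-consistent field). Concretely, fix $t$ and regard $\Phi_t\colon(x,v)\mapsto\bigl(X^0(t;t,x,v),V^0(t;t,x,v)\bigr)$, read off from \eqref{chara} at $s=0$. From those explicit formulas $\partial_x X^0=I_3$ and $\partial_x V^0=0$, while $v\mapsto V^0$ is an orthogonal (rotation) map of $\R^3$; hence $D\Phi_t$ is block-triangular with $\abs{\det D\Phi_t}=1$, so $\Phi_t$ is volume-preserving, and moreover $\abs{V^0}=\abs{v}$. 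Setting $\bar f(t,x,v):=f^{in}(\Phi_t(x,v))$, the change of variables $(y,w)=\Phi_t(x,v)$ then gives $\norme{\bar f(t)}_p=\norme{f^{in}}_p$ for $1\le p\le\infty$ and $M_k(\bar f(t))=M_k(f^{in})$, while $\rho_0(t,x)=\int_v\bar f(t,x,v)\,dv=m_0(\bar f(t))(x)$; in particular $\norme{\rho_0(t)}_1=\norme{f^{in}}_1$.

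Next I would estimate $\rho_0$ in the Lebesgue space dictated by the weak Young inequality. By \cref{lem:ineq_mk} with $k'=0$, $p=\infty$, $q=1$ (so $r=\tfrac{k+3}{3}$), one gets $\norme{\rho_0(t)}_{(k+3)/3}\le c(k)\,\norme{f^{in}}_\infty^{k/(k+3)}M_k(f^{in})^{3/(k+3)}$. Interpolating this with $\norme{\rho_0(t)}_1=\norme{f^{in}}_1$ bounds $\norme{\rho_0(t)}_{q_0}$ with $q_0=\tfrac{3(k+3)}{k+6}$, which lies between $1$ and $\tfrac{k+3}{3}$ exactly when $k\ge3$ — and that is the only range one needs here, since the lower moments $0\le k<3$ appearing in \cref{theo:main} are recovered either from \cref{lem:estE} (when $k+3\le\tfrac{15}{4}$) or, at the end of the whole argument, by Hölder interpolation of $M_k$ between $M_0$ and $M_{k_0}$. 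Finally, since $\nabla K_3(x)=-\tfrac{1}{4\pi}\tfrac{x}{\abs{x}^3}$ satisfies $\abs{\nabla K_3(x)}=\tfrac{1}{4\pi\abs{x}^2}$, hence $\nabla K_3\in L^{3/2,w}(\R^3)$, and since $\tfrac{1}{q_0}+\tfrac{2}{3}=1+\tfrac{1}{k+3}$, \cref{w_Young} yields
\[
\norme{E^0(t,\cdot)}_{k+3}=\norme{\nabla K_3\star\rho_0(t,\cdot)}_{k+3}\le c\,\norme{\nabla K_3}_{3/2,w}\,\norme{\rho_0(t)}_{q_0},
\]
which is the announced estimate (with the constant depending, harmlessly, also on $\norme{f^{in}}_\infty$).

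I do not expect a genuine obstacle in this lemma: it is the \emph{easy} half of the electric-field control, the real difficulties — the intricate magnetized characteristics and the singularities at the cyclotron periods — being entirely confined to $\tilde E$. The only two points requiring care are the volume- and modulus-preservation of $\Phi_t$ (which is what makes the conserved quantities $\norme{f^{in}}_1,\norme{f^{in}}_\infty,M_k(f^{in})$ available for $\bar f$), and the exact matching of Lebesgue exponents across \cref{lem:ineq_mk}, the interpolation step, and \cref{w_Young}.
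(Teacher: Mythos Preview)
Your argument is correct and follows essentially the same route as the paper: both apply the weak Young inequality with $\nabla K_3\in L^{3/2,w}$ to reduce to $\norme{\rho_0(t)}_{q_0}$ with $q_0=\tfrac{3k+9}{k+6}$, then exploit the volume- and modulus-preserving nature of the free magnetized flow to bound this by the initial data via \cref{lem:ineq_mk}. The only cosmetic difference is that the paper applies \cref{lem:ineq_mk} directly at the exponent $r=q_0$ (introducing an auxiliary moment order $l$ with $\tfrac{l+3}{3}=q_0$ and then interpolating $M_l(0)$ between $\norme{f^{in}}_1$ and $M_k(0)$), whereas you apply \cref{lem:ineq_mk} at $r=\tfrac{k+3}{3}$ and interpolate afterwards in Lebesgue spaces; your observation that the constant also depends on $\norme{f^{in}}_\infty$ is correct and applies equally to the paper's proof.
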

\begin{proof}
Thanks to the weak Young inequality, we can write
\begin{equation}
\norme{E^0(t,\cdot)}_{k+3}\leq \norme{\nabla K_3}_{\frac{3}{2},w} \norme{\rho_0(t,\cdot)}_p
\end{equation}
with $p=\frac{3k+9}{k+6}$. And the $\frac{3k+9}{k+6}$ norm of $\rho_0(t,\cdot)$ can in turn be controlled using lemma \ref{lem:ineq_mk}, where $k'=0, r=\frac{3k+9}{k+6}, p=\infty, q=1$ and with simple change of variables
\begin{equation*}
\norme{\rho_0(t,\cdot)}_\frac{3k+9}{k+6} \leq c \norme{f}^{\frac{l}{l+3}}_\infty \left(\iint \abs{v}^l f^{in} (X^0(t),V^0(t))dx dv\right)^\frac{3}{l+3}= C M_l(0)^\frac{3}{l+3}\\
\end{equation*}
with $\frac{l+3}{3}=\frac{3k+9}{k+6}$.

\noindent Since $k>3$, $\frac{l}{3}=\frac{2k+3}{k+6}\leq \frac{2k+k}{6}=\frac{k}{3}$. Hence $l\leq k$, and thanks to lemma \ref{lem:ineq_mk} with $p=\infty, q=1, k'=l$ we obtain $M_l(0)\leq c\norme{f^{in}}_1^{\frac{k-l}{k}}M_k(0)^\frac{l}{k}$.

\noindent This gives us a bound on $\rho_0(t,\cdot)$,
\begin{equation}\label{bound_rho_0}
\norme{\rho_0(t,\cdot)}_\frac{3k+9}{k+6} \leq \left(c\norme{f^{in}}_1^{\frac{k-l}{k}}M_k(0)^\frac{l}{k}\right)^\frac{3}{l+3}=C(k,\norme{f^{in}}_1,M_k(f^{in})).
\end{equation}
with $\frac{l+3}{3}=\frac{3k+9}{k+6}$.
\end{proof}

To estimate the second term $\tilde{E}$, we first notice that it can be written as
\begin{equation*}
\sum_{j,l=1}^{3} \partial_j \partial_l G_3 \star \int_0^t fH_t dv ds
\end{equation*}
so that we can apply the Calder\'on-Zygmund inequality (lemma \ref{cal_zy})
\begin{equation}
\norme{\tilde{E}(t,\cdot)}_{k+3} \leq \norme{ \underset{\Sigma(t,x)}{\underbrace{\int_0^t \int_v\left(fH_t\right)\left(s,X(s;t,x,v),V(s;t,x,v)\right)dvds}}}_{k+3}
\end{equation}
To simplify the expression of $\Sigma$, we consider the classical change of variables
\begin{align*}
\phi(v_1,v_2,v_3)& =
\begin{pmatrix}
v_1\cos(\omega (s-t))+v_2\sin(\omega (s-t))\\
-v_1\sin(\omega (s-t))+v_2\cos(\omega (s-t))\\
v_3
\end{pmatrix}\\
& =V(s;t,x,v)
\end{align*}
as well as the change of variable in time $\alpha(s)=t-s$, so that $\Sigma$ can now be written
\begin{equation}
\Sigma(t,x)=\int_0^t\int_vf(t-s,X^*(s,x,v),v)D(t-s,s,X^*(s,x,v))dvds
\end{equation}
with
\begin{equation}\label{express_D}
D\left(t,s,x\right)=
\begin{pmatrix}
-\frac{\sin(\omega s)}{\omega} E_1(t,x)+\frac{\cos(\omega s)-1}{\omega}E_2(t,x)\\
\frac{1-\cos(\omega s)}{\omega} E_1(t,x)-\frac{\sin(\omega s)}{\omega}E_2(t,x)\\
-sE_3(t,x)
\end{pmatrix}
\end{equation}
and
\begin{equation}
X^*(s,x,v)=
\begin{pmatrix}
x_1-\frac{v_1}{\omega}\sin(\omega s)+\frac{v_2}{\omega}(\cos(\omega s)-1))\\
x_2+\frac{v_1}{\omega}(1-\cos(\omega s))-\frac{v_2}{\omega}\sin(\omega s)\\
x_3-v_3 s
\end{pmatrix}	
\end{equation}
We first study $\sigma(s,t,x)$ defined by
\begin{equation}
\sigma(s,t,x)=\int_vf(t-s,X^*(s,x,v),v)D(t-s,s,X^*(s,x,v))dvds.
\end{equation}
\begin{lemma}\label{lem:estsig}
We have the following estimate for $\sigma$.
\begin{equation}\label{est_larget}
\norme{\sigma(s,t,\cdot)}_{k+3}\leq C  \frac{\sqrt{2}}{s}\left(\frac{\omega^2 s^2}{2(1-\cos(\omega s))} \right)^\frac{2}{3} M_k(t-s)^\frac{1}{k+3}
\end{equation}
\end{lemma}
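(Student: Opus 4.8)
The plan is to convert the velocity integral defining $\sigma$ into a spatial convolution against $|E|$ by a change of variables, remove the $|E|$ factor by means of \cref{lem:lineq_1}, and then recognise what remains as a zeroth velocity moment density, to which \cref{lem:ineq_mk} applies. For fixed $(s,x)$ the formula for $X^*$ above shows that $v\mapsto X^*(s,x,v)$ is affine, $X^*(s,x,v)=x-A(s)v$, where $A(s)$ is the linear map on $\R^3$ given by $A(s)v=x-X^*(s,x,v)$ (independent of $x$); a direct computation gives $\det A(s)=\frac{2s(1-\cos\omega s)}{\omega^2}=:J(s)$. Away from the degeneracies $s=0$ and $\omega s\in2\pi\NN$ — where the right-hand side of \cref{est_larget} equals $+\infty$, so there is nothing to prove — the substitution $y=X^*(s,x,v)$, for which $dv=J(s)^{-1}dy$ and $v=A(s)^{-1}(x-y)$, yields
\begin{equation*}
\sigma(s,t,x)=\frac1{J(s)}\int_{\R^3}f\big(t-s,y,A(s)^{-1}(x-y)\big)\,D(t-s,s,y)\,dy.
\end{equation*}

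Next I bound the amplitude: from \cref{express_D} one checks that $|D_1|^2+|D_2|^2=\frac{2(1-\cos\omega s)}{\omega^2}(E_1^2+E_2^2)$ and $|D_3|^2=s^2E_3^2$ (the $E_j$ evaluated at $(t-s,y)$), so that, using $1-\cos\omega s\le\frac{\omega^2s^2}{2}$, one has $|D(t-s,s,y)|\le\sqrt2\,s\,|E(t-s,y)|$ — this is where the $\sqrt2$ originates. Applying \cref{lem:lineq_1} in the variable $y$, for fixed $x$, with $g(y)=f(t-s,y,A(s)^{-1}(x-y))$ and $h(y)=|E(t-s,y)|$, and invoking $\norme{E(t-s)}_{3/2,w}\le C$ from \cref{lem:estE}, $\norme{g}_\infty\le\norme{f^{in}}_\infty$, and $\norme{g}_{L^1_y}=J(s)\,\rho^\flat(s,t,x)$ with $\rho^\flat(s,t,x):=\int_{\R^3}f(t-s,x-A(s)w,w)\,dw$, one obtains
\begin{equation*}
|\sigma(s,t,x)|\le C\,s\,J(s)^{-2/3}\,\norme{f^{in}}_\infty^{2/3}\,\rho^\flat(s,t,x)^{1/3}.
\end{equation*}
Taking the $L^{k+3}_x$ norm reduces matters to bounding $\norme{\rho^\flat(s,t,\cdot)}_{(k+3)/3}$. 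Here the key remark is that $\rho^\flat(s,t,\cdot)=m_0(g_s)$ for the shear $g_s(x,w):=f(t-s,x-A(s)w,w)$, which is measure preserving in $(x,w)$, so $\norme{g_s}_\infty=\norme{f^{in}}_\infty$ and $M_k(g_s)=M_k(t-s)$; \cref{lem:ineq_mk} applied to $g_s$ with $p=\infty$, $q=1$, $k'=0$ (hence $r=(k+3)/3$) then gives $\norme{\rho^\flat(s,t,\cdot)}_{(k+3)/3}\le c\,\norme{f^{in}}_\infty^{k/(k+3)}M_k(t-s)^{3/(k+3)}$.

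Combining the last three inequalities and using the identity $s\,J(s)^{-2/3}=\frac1s\big(\frac{\omega^2s^2}{2(1-\cos\omega s)}\big)^{2/3}$, while absorbing the powers of $\norme{f^{in}}_\infty$ and the numerical constants of the two lemmas into $C$, produces exactly \cref{est_larget} with $C=C(k,\norme{f^{in}}_1,\norme{f^{in}}_\infty,\mathcal{E}_{in})$. I expect the only slightly delicate points to be the bookkeeping of $A(s)$ and of $\det A(s)$ (and noting that the resulting prefactor is genuinely singular at the cyclotron times $\omega s\in2\pi\NN$, which is what forces the subsequent analysis onto $\intervalleff{0}{T_\omega}$), and the observation that after the change of variables the residual density $\rho^\flat$ still carries the \emph{full} moment $M_k(t-s)$ — this is what makes \cref{lem:ineq_mk} produce the clean exponent $\frac1{k+3}$ on $M_k(t-s)$ rather than a weighted or localised one.
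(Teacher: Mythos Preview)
Your proof is correct and follows essentially the same route as the paper: both use \cref{lem:lineq_1}, the affine change of variables $v\mapsto X^*(s,x,v)$ with Jacobian $J(s)=\tfrac{2s(1-\cos\omega s)}{\omega^2}$, the pointwise bound $|D|\le\sqrt2\,s\,|E|$, the uniform control of $\norme{E}_{3/2,w}$ from \cref{lem:estE}, and finally \cref{lem:ineq_mk} with $k'=0$, $r=\tfrac{k+3}{3}$. The only cosmetic difference is ordering: you substitute $y=X^*(s,x,v)$ \emph{before} invoking \cref{lem:lineq_1}, so that $h(y)=|E(t-s,y)|$ appears directly and you avoid the paper's separate computation of how the weak $L^{3/2}$ norm transforms under the affine map (the passage around \eqref{nwE}); this is a mild streamlining but not a different argument.
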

\begin{proof}
Thanks to \cref{lem:lineq_1} we obtain
\begin{equation}\label{ineq_I}
\abs{\sigma(s,t,x)}\leq c \norme{D(t-s,s,X^*(s,x,\cdot))}_{\frac{3}{2},w} \norme{f}_\infty^\frac{2}{3}\norme{f(t-s,X^*(s,x,\cdot),\cdot)}_1^\frac{1}{3}
\end{equation}
Let's first look at the weak $\frac{3}{2}$-norm of $D(t-s,s,X^*(s,x,\cdot))$  in \eqref{ineq_I}. In the following computations $D$ (respectively $E$) and its coordinates $D_i$ (respectively $E_i$) are always evaluated at $(t-s,s,X^*(s,x,\cdot))$ (respectively $(t-s,X^*(s,x,\cdot))$) and $\cos=\cos(\omega s)$ (respectively $\sin=\sin(\omega s)$). 

By definition,
\begin{equation*}
\norme{D}^2_{\frac{3}{2},w}=\sum_{i=1}^{3} \norme{D_i}^2_{\frac{3}{2},w}
\end{equation*} 
so first we estimate $\norme{D_1}^2_{\frac{3}{2},w}$
\begin{align*}
\norme{D_1}^2_{\frac{3}{2},w}& \leq \frac{\sin^2}{\omega^2}\norme{E_1}^2_{\frac{3}{2},w}+\frac{(1-\cos)^2}{\omega^2}\norme{E_2}^2_{\frac{3}{2},w}+2\frac{\abs{\sin}\abs{(1-\cos)}}{\omega^2}\norme{E_1}_{\frac{3}{2},w}\norme{E_2}_{\frac{3}{2},w}\\
& \leq \frac{\sin^2}{\omega^2}\norme{E_1}^2_{\frac{3}{2},w}+\frac{(1-\cos)^2}{\omega^2}\norme{E_2}^2_{\frac{3}{2},w}+\frac{(1-\cos)^2}{\omega^2}\norme{E_1}^2_{\frac{3}{2},w}+\frac{\sin^2}{\omega^2}\norme{E_2}^2_{\frac{3}{2},w}\\
& = \frac{2(1-\cos)}{\omega^2}\left(\norme{E_1}^2_{\frac{3}{2},w}+\norme{E_2}^2_{\frac{3}{2},w}\right)
\end{align*}
The computations are the same for $\norme{D_2}^2_{\frac{3}{2},w}$ so that we can write
\begin{equation}
\norme{D}^2_{\frac{3}{2},w}  \leq \frac{4(1-\cos(\omega s))}{\omega^2}\left(\norme{E_1}^2_{\frac{3}{2},w}+\norme{E_2}^2_{\frac{3}{2},w}\right)+s^2\norme{E_3}^2_{\frac{3}{2},w}
\end{equation}
and since for all $x \in \R$, $2(1-\cos(x))\leq x^2$
\begin{equation}\label{nwH}
\norme{D}^2_{\frac{3}{2},w} \leq 2s^2\left(\norme{E_1}^2_{\frac{3}{2},w}+\norme{E_2}^2_{\frac{3}{2},w}\right)+s^2\norme{E_3}^2_{\frac{3}{2},w} \leq 2s^2\norme{E}^2_{\frac{3}{2},w}
\end{equation}
Now let's try to express $\norme{E_1(t-s,X^*(s,x,\cdot))}_{\frac{3}{2},w}$, by definition
\begin{equation}
\norme{E_1(t-s,X^*(s,x,\cdot))}_{\frac{3}{2},w}=\underset{\abs{A}<\infty}{\sup} \abs{A}^{-\frac{1}{3}} \int_A \abs{E_1(t-s,X^*(s,x,v))}dv
\end{equation}
and if we consider the change of variables $\psi(v)=X^*(s,x,v)$, for $s>0$, whose Jacobian matrix is given by
\begin{equation}
\text{\rm Jac}(\psi)=
\begin{pmatrix}
-\sin(\omega s) & \cos(\omega s)-1 & 0\\
1-\cos(\omega s) & -\sin(\omega s) & 0\\
0 & 0 & -s
\end{pmatrix}
\end{equation}
we can write
\begin{equation*}
\int_A \abs{E_1(t-s,X^*(s,x,v))}dv =\int_{\psi(A)} \abs{E_1(t-s,u))}\abs{\text{\rm Jac}(\psi)}^{-1}du
\end{equation*}
So finally
\begin{align*}
& \norme{E_1(t-s,X^*(s,x,\cdot))}_{\frac{3}{2},w}\\
& =\underset{\abs{A}<\infty}{\sup} \abs{A}^{-\frac{1}{3}} \int_{\psi(A)} \abs{E_1(t-s,u))}\abs{\text{\rm Jac}(\psi)}^{-1}du\\
& = \underset{\abs{A}<\infty}{\sup} \abs{\psi(A)}^{-\frac{1}{3}}\underset{=\abs{\text{\rm Jac}(\psi)}^{-1}}{\left(\underbrace{\frac{\abs{A}}{\abs{\psi(A)}}}\right)}^{-\frac{1}{3}}\abs{\text{\rm Jac}(\psi)}^{-1} \int_{\psi(A)} \abs{E_1(t-s,u))}du\\
& = \underset{\abs{A}<\infty}{\sup} \abs{\psi(A)}^{-\frac{1}{3}}\abs{\text{\rm Jac}(\psi)}^{-\frac{2}{3}} \int_{\psi(A)} \abs{E_1(t-s,u))}du\\
& = \abs{\text{\rm Jac}(\psi)}^{-\frac{2}{3}} \norme{E_1(t-s,\cdot)}_{\frac{3}{2},w}
\end{align*}
The computations are the same for $\norme{E_2(t-s,X^*(s,x,\cdot))}_{\frac{3}{2},w}$ and\\ $\norme{E_3(t-s,X^*(s,x,\cdot))}_{\frac{3}{2},w}$ so that
\begin{equation}\label{nwE}
\begin{aligned}
\norme{E(t-s,X^*(s,x,\cdot))}_{\frac{3}{2},w} & =\abs{\text{\rm Jac}(\psi)}^{-\frac{2}{3}} \norme{E(t-s,\cdot)}_{\frac{3}{2},w}\\
& =\left(\frac{1}{2s(1-\cos(\omega s))}\right)^\frac{2}{3}\norme{E(t-s,\cdot)}_{\frac{3}{2},w}
\end{aligned}
\end{equation}
Combining \eqref{nwH} and \eqref{nwE} we obtain the following estimate
\begin{equation}
\norme{D(t-s,s,X^*(s,x,\cdot))}_{\frac{3}{2},w}\leq \frac{\sqrt{2}}{s}\left(\frac{\omega^2 s^2}{2(1-\cos(\omega s))}\right)^\frac{2}{3}\underset{\leq C}{\underbrace{\norme{E(t-s,\cdot)}_{\frac{3}{2},w}}}.
\end{equation}
and since $\norme{f}_\infty \leq C$ we have
\begin{equation}
\abs{\sigma(s,t,x)} \leq C \frac{\sqrt{2}}{s}\left(\frac{\omega^2 s^2}{2(1-\cos(\omega s))}\right)^\frac{2}{3}\norme{f(t-s,X^*(s,x,\cdot),\cdot)}_1^\frac{1}{3}ds.
\end{equation}
So that
\begin{equation}
\norme{\sigma(s,t,\cdot)}_{k+3}\leq C  \frac{\sqrt{2}}{s}\left(\frac{\omega^2 s^2}{2(1-\cos(\omega s))} \right)^\frac{2}{3} \norme{\left(\int f(t-s,X^*(s,\cdot,v),v)dv\right)^\frac{1}{3}}_{k+3}
\end{equation}
Furthermore, for any function $\psi$ we have
\begin{equation}\label{norme_puissance}
\norme{\psi^\alpha}_p=\norme{\psi}^\alpha_{\alpha p}
\end{equation}
so that
\begin{equation}
\norme{\left(\int f(t-s,X^*(s,\cdot,v),v)dv\right)^\frac{1}{3}}_{k+3} \leq \norme{\int f(t-s,X^*(s,\cdot,v),v)dv}^\frac{1}{3}_\frac{k+3}{3},
\end{equation}
and thanks to \cref{lem:ineq_mk} with $p=\infty, q=1, k'=0, r=\frac{k+3}{3}$ we obtain the desired estimate
\begin{equation*}
\norme{\sigma(s,t,\cdot)}_{k+3}\leq C  \frac{\sqrt{2}}{s}\left(\frac{\omega^2 s^2}{2(1-\cos(\omega s))} \right)^\frac{2}{3} M_k(t-s)^\frac{1}{k+3}
\end{equation*}
with $C=C(k,\norme{f^{in}}_1,\norme{f^{in}}_\infty,\mathcal{E}_{in})$.
\end{proof}

Like in the unmagnetized case, we exactly obtain the desired exponent $\frac{1}{k+3}$ on $M_k$ in our estimate. However, as mentioned above, we also see the singularities at times $\frac{2\pi k}{\omega}, k \in \NN$.

To deal with the singularities that stem from the added magnetic field, we notice that all our estimates depend only on $k,\omega$ and $f^{in}$, which means that if we can show propagation of moments on an interval $\intervalleff{0}{T_\omega}$, then we can reiterate our analysis with the new initial condition $f^{in}_1=f(T_\omega)$ and so on.

Since the singularities depend on $\omega$, it is logical to take $T_\omega$ that also depends on $\omega$ (this also justifies the notation). As said above, we choose to take $T_\omega=\frac{\pi}{\omega}$ (in fact, we could have taken any $t \in \intervalleoo{0}{\frac{2\pi}{\omega}}$).

Now to control $\norme{\Sigma(t,\cdot)}_{k+3}$ with $M_k(t)^\frac{1}{k+3}$ we write
\begin{equation}\label{Sigma}
\Sigma(t,x) :=\int_{0}^{t0}...+\int_{t_0}^{t}...
\end{equation}
where $t_0 \in \intervalleoo{0}{T_\omega}$.
This is an idea from the original paper \cite{LP91}. The interval $\intervalleff{0}{t_0}$ is considered small and thus we control the large $t$ contribution ($\int_{t_0}^{t}$) precisely (with $M_k(t)^\beta$, $\beta \leq \frac{1}{k+3}$) and the small $t$ contribution ($\int_{0}^{t_0}$) less precisely (with $M_k(t)^\gamma$, $\gamma >0$). This last imprecise estimate is compensated by the fact that we integrate on a short length segment. However, the main difference with the unmagnetized case is that now we need $t_0$ to be small compared to $T_\omega=\frac{\pi}{\omega}$ to deal with the singularities.

\subsubsection{Small time estimates}
First we estimate the small contribution in time, as in \cite{R05}, but with the added difficulty of the singularities.
\begin{proposition}
We have the following estimate for the small contribution in time
\begin{equation}\label{ineq:estsmallt}
\norme{\int_0^{t_0} \sigma(s,t,\cdot)ds}_{k+3}
\leq C (\omega t_0)^{2-\frac{3}{d}}(1+t)^\frac{l+3}{k+3}\left(1+\underset{0\leq s\leq t}{\sup} M_k(s)\right)^\frac{3(l+3)}{(k+3)^2}
\end{equation}
with $C=C(k,\norme{f^{in}}_1,\norme{f^{in}}_\infty,\mathcal{E}_{in})$ and $l$ is an exponent defined in the proof.
\end{proposition}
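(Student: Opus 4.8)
The plan is to move the spatial $L^{k+3}$-norm inside the time integral via Minkowski's integral inequality,
\[
\norme{\int_0^{t_0}\sigma(s,t,\cdot)\,ds}_{k+3}\le\int_0^{t_0}\norme{\sigma(s,t,\cdot)}_{k+3}\,ds,
\]
and then to prove a pointwise-in-$s$ bound of the shape $\norme{\sigma(s,t,\cdot)}_{k+3}\le C\,\omega^{-2/d}\,s^{1-3/d}\,(\text{moment factor})$ for each $s\in\intervalleoo{0}{t_0}$, with $d\in\intervalleof{3/2}{15/4}$. Since $d>3/2$ we have $1-3/d>-1$, so $\int_0^{t_0}s^{1-3/d}\,ds=\frac{t_0^{2-3/d}}{2-3/d}<\infty$; collecting the $\omega$-dependent constants then turns the prefactor into $(\omega t_0)^{2-3/d}$, and the moment factor survives as the remaining part of \eqref{ineq:estsmallt}.

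For the fixed-$s$ bound I would proceed as in the proof of \cref{lem:estsig} up to the point where \cref{lem:lineq_1} was used — that step is precisely what produces the non-integrable factor $1/s$ — and replace it by a Hölder split. From the pointwise inequality $\abs{D(t-s,s,y)}\le\sqrt{2}\,s\,\abs{E(t-s,y)}$ (which is \eqref{nwH} read pointwise) one gets
\[
\abs{\sigma(s,t,x)}\le\sqrt{2}\,s\int_v f(t-s,X^*(s,x,v),v)\,\abs{E(t-s,X^*(s,x,v))}\,dv,
\]
and writing $f\abs{E}=(f^{1/d}\abs{E})(f^{1/d'})$ with $\frac1d+\frac1{d'}=1$, Hölder in $v$ bounds this by $\sqrt{2}\,s$ times the product of $\bigl(\int_v f\,\abs{E}^{d}\,dv\bigr)^{1/d}$ and $\bigl(\int_v f\,dv\bigr)^{1/d'}$, all evaluated along $X^*$. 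In the first factor I change variables $v\mapsto X^*(s,x,v)$; its Jacobian has modulus $2s(1-\cos(\omega s))$, and the reason for restricting to $s\le t_0<T_\omega=\frac{\pi}{\omega}$ is exactly that then $\omega s\in\intervalleff{0}{\pi}$, so $2(1-\cos(\omega s))$ stays comparable to $\omega^2 s^2$ and bounded below away from $0$ once $s$ is away from $0$ — the cyclotron singularities at $s=\frac{2\pi k}{\omega}$ never appear. Combining this with $\norme{f}_\infty\le C$ and \cref{lem:estE} (applicable since $d\le\frac{15}{4}$) bounds the first factor by $C\bigl(2s(1-\cos(\omega s))\bigr)^{-1/d}\le C\,\omega^{-2/d}s^{-3/d}$.

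It then remains to take the $L^{k+3}_x$-norm of $\bigl(\int_v f(t-s,X^*(s,\cdot,v),v)\,dv\bigr)^{1/d'}$; using $\norme{\psi^\alpha}_p=\norme{\psi}_{\alpha p}^\alpha$ this equals $\norme{\int_v f(t-s,X^*(s,\cdot,v),v)\,dv}_{(k+3)/d'}^{1/d'}$, and I estimate it with \cref{lem:ineq_mk}. Since $x\mapsto X^*(s,x,v)$ is a translation for each fixed $v$, the map $(x,v)\mapsto(X^*(s,x,v),v)$ preserves Lebesgue measure, so $g(x,v):=f(t-s,X^*(s,x,v),v)$ has the same $L^p$-norms and the same velocity moments as $f(t-s,\cdot,\cdot)$; applying \cref{lem:ineq_mk} with $k'=0$ and a suitable moment order — denoted $l$ in the statement — bounds this norm by a constant times a power of $M_l(t-s)$, and controlling that intermediate moment by $M_k(t-s)$ and the conserved mass $\norme{f^{in}}_1$, together with the at-worst polynomial-in-time growth of low-order moments coming from \eqref{eq:diff_Mk} and \cref{lem:estE}, produces the factors $(1+t)^{(l+3)/(k+3)}$ and $\bigl(1+\sup_{0\le s\le t}M_k(s)\bigr)^{3(l+3)/(k+3)^2}$. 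Inserting this into the $s$-integral and integrating over $\intervalleff{0}{t_0}$ yields \eqref{ineq:estsmallt}.

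The step I expect to be the real obstacle is the one forced by the singularity of the Jacobian $2s(1-\cos(\omega s))$ at $s=0$: one cannot use the sharp estimate of \cref{lem:estsig}, and the Hölder exponent must be taken with $d>3/2$ to make $\int_0^{t_0}s^{1-3/d}\,ds$ converge — which is exactly the lower endpoint of the admissible range in \cref{lem:estE}, so there is no margin to spare. Executing this cleanly requires keeping precise track of the powers of $\omega$ so that they recombine into $(\omega t_0)^{2-3/d}$, and choosing the intermediate exponent $l$ so that the surviving moment factor is still controlled purely by $\norme{f^{in}}_1$, $\norme{f^{in}}_\infty$, $\mathcal{E}_{in}$ and $\sup_{[0,t]}M_k$; by contrast, because $t_0<T_\omega$, the cyclotron singularities do not interfere with this particular estimate.
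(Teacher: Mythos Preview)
Your proposal is correct and follows essentially the same route as the paper. The only cosmetic difference is in the H\"older split: the paper applies H\"older directly as $\abs{\int f D\,dv}\le(\int\abs{D}^d\,dv)^{1/d}(\int f^{d'}\,dv)^{1/d'}$ and then uses $f^{d'}\le\norme{f}_\infty^{d'-1}f$, whereas you first bound $\abs{D}\le\sqrt{2}\,s\,\abs{E}$ pointwise and split $f\abs{E}=(f^{1/d}\abs{E})(f^{1/d'})$; the two are algebraically equivalent and produce identical factors. Your identification of the exponent $l$ via \cref{lem:ineq_mk} with $r=(k+3)/d'$, the control of $M_l$ by $M_k$ through the differential inequality (this is the content of \cref{est_ml} in the appendix), the change of variables with Jacobian $2s(1-\cos(\omega s))$, and the integrability condition $d>3/2$ all match the paper exactly.
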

\begin{proof}
Thanks to the H\"older inequality with $\frac{1}{d}+\frac{1}{d'}=1$, we can write
\begin{align*}
& \abs{\sigma(s,t,x)}\\
& \leq \left(\int_{\R^3}\abs{D\left(t-s,s,X^*(s,x,v)\right)}^d dv\right)^\frac{1}{d}\left(\int_{\R^3} f\left(t-s,X^*(s,x,v),v\right)^{d'}dv\right)^\frac{1}{d'}\\
& \leq \sqrt{2}s\left(\frac{1}{2s(1-\cos(\omega s))}\right)^\frac{1}{d}\norme{E(t-s,\cdot)}_d \norme{f}_\infty^\frac{1}{d} \left(\int_{\R^3} f\left(t-s,X^*(s,x,v),v\right)dv\right)^\frac{1}{d'}.
\end{align*}
Using \cref{norme_puissance} with $\alpha=\frac{1}{d'}, p=k+3$ and \cref{lem:ineq_mk} with $p=\infty, q=1, k'=0, r=\frac{k+3}{d'}$, this implies

\begin{align*}
& \norme{\int_0^{t_0} \sigma(s,t,\cdot)ds}_{k+3}\\
& \leq C \underset{0\leq s\leq t}{\sup}\norme{E(t-s,\cdot)}_d \\
& \times \underset{0\leq s\leq t}{\sup} \norme{\left(\int_{\R^3} f\left(t-s,X^*(s,x,v),v\right)dv\right)}_{\frac{k+3}{d'}}^\frac{1}{d'} \int_0^{t_0} s\left(\frac{1}{s(1-\cos(\omega s))}\right)^\frac{1}{d} ds\\
& \leq C \underset{0\leq s\leq t}{\sup}\norme{E(t-s,\cdot)}_d \underset{0\leq s\leq t}{\sup} M_l(t-s)^\frac{1}{k+3} \int_0^{t_0} s\left(\frac{1}{s(1-\cos(\omega s))}\right)^\frac{1}{d} ds
\end{align*}
where thanks to \cref{lem:ineq_mk}, the new exponent $l$ verifies $\frac{k+3}{d'}=\frac{l+3}{3}$. Furthermore, we saw in \cref{lem:estE} that the electric field is uniformly bounded in $L^d(\R^3)$ for $\frac{3}{2}< d \leq \frac{15}{4}$ (so $\frac{15}{11}\leq d' < 3$). This implies the following estimate, with $\frac{k+3}{d'}=\frac{l+3}{3}$ and $\frac{15}{11}\leq d' < 3$,
\begin{equation}
\label{est_smallt}
\norme{\int_0^{t_0} \sigma(s,t,\cdot)ds}_{k+3} \leq C \left(\int_0^{t_0} s\left(\frac{1}{s(1-\cos(\omega s))}\right)^\frac{1}{d} ds\right) \underset{0\leq s\leq t}{\sup} M_l(s)^\frac{1}{k+3}.
\end{equation}
Thanks to \cref{est_ml} we have that
\begin{equation*}
\underset{0\leq s\leq t}{\sup} M_l(s) \leq C(1+t)^{l+3}\left(1+\underset{0\leq s\leq t}{\sup} M_k(s)\right)^\frac{3(l+3)}{k+3}
\end{equation*}
so that finally we obtain
\begin{equation}\label{smallt}
\begin{aligned}
&\norme{\int_0^{t_0} \sigma(s,t,\cdot)ds}_{k+3}\\  
& \leq C \left(\int_0^{t_0} \underset{\zeta(s)}{\underbrace{s\left(\frac{1}{s(1-\cos(\omega s))}\right)^\frac{1}{d}}} ds\right)(1+t)^\frac{l+3}{k+3}\left(1+\underset{0\leq s\leq t}{\sup} M_k(s)\right)^\frac{3(l+3)}{(k+3)^2}.
\end{aligned}
\end{equation}
with $C=C(k,\norme{f^{in}}_1,\norme{f^{in}}_\infty,\mathcal{E}_{in})$.

Now we must study $\int_0^{t_0} \zeta(s) ds$ (in the case without magnetic field $I=\intervalleff{0}{t_0}$ and $\zeta(s)=s^{1-\frac{3}{d}}$).

We have
\begin{equation*}
\begin{aligned}
\int_0^{t_0} \zeta(s) ds & = \omega^{\frac{1}{d}-2}\int_0^{\omega t_0} 	s\left(\frac{1}{s(1-\cos(s))}\right)^\frac{1}{d} ds\\
& =\omega^{\frac{1}{d}-2}\int_0^{\omega t_0} 	s^{1-\frac{3}{d}}\left(\frac{s^2}{(1-\cos(s))}\right)^\frac{1}{d} ds
\end{aligned}
\end{equation*}
Since $\omega t_0 \leq \omega t \leq \pi$, the function $s \mapsto \left(\frac{s^2}{(1-\cos(s))}\right)^\frac{1}{d}$ is bounded on $\intervalleff{0}{\omega t_0}$ (independently of $t_0$) so that finally
\begin{equation}\label{smallt1}
\int_0^{t_0} \zeta(s) ds \leq C \int_0^{\omega t_0} s^{1-\frac{3}{d}}ds \leq C(\omega t_0)^{2-\frac{3}{d}}
\end{equation}
\end{proof}
\subsubsection{Large time estimates}
Now we look at the large $t$ contribution, where our hope is to get a logarithmic dependence in $t_0$ just like in \cite{LP91,R05}. 
\begin{proposition}
We have the following estimate for the large contribution in time
\begin{equation}\label{est_larget2}
\norme{\int_{t_0}^t\sigma(s,t,\cdot)ds}_{k+3} \leq C \ln \left(\frac{t}{t_0}\right)\underset{0\leq s\leq t}{\sup}  M_k(s)^\frac{1}{k+3} 
\end{equation}
with $C=C(k,\norme{f^{in}}_1,\norme{f^{in}}_\infty,\mathcal{E}_{in})$.
\end{proposition}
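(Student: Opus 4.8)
The plan is to integrate the pointwise-in-$s$ bound from \cref{lem:estsig} over $s\in\intervalleff{t_0}{t}$ and show that the resulting $s$-integral diverges only logarithmically. First I would apply Minkowski's integral inequality to pull the $L^{k+3}$ norm inside the $s$-integral, $\norme{\int_{t_0}^t\sigma(s,t,\cdot)\,ds}_{k+3}\le\int_{t_0}^t\norme{\sigma(s,t,\cdot)}_{k+3}\,ds$, and then invoke \cref{est_larget} and bound $M_k(t-s)^{1/(k+3)}$ by $\sup_{0\le s\le t}M_k(s)^{1/(k+3)}$, which gives
\begin{equation*}
\norme{\int_{t_0}^t\sigma(s,t,\cdot)\,ds}_{k+3}\le C\sqrt{2}\,\Bigl(\sup_{0\le s\le t}M_k(s)^{\frac{1}{k+3}}\Bigr)\int_{t_0}^t\frac{1}{s}\Bigl(\frac{\omega^2s^2}{2(1-\cos(\omega s))}\Bigr)^{\frac23}ds.
\end{equation*}

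The remaining step is to estimate $\int_{t_0}^t\frac{1}{s}\bigl(\frac{\omega^2s^2}{2(1-\cos(\omega s))}\bigr)^{2/3}ds\le C\ln(t/t_0)$. The crucial point -- and the one place where the restriction to $\intervalleff{0}{T_\omega}$ with $T_\omega=\frac{\pi}{\omega}$ is used -- is that $x\mapsto\frac{x^2}{2(1-\cos x)}$ is continuous and bounded on $\intervalleof{0}{\pi}$: it extends continuously by the value $1$ at $x=0$ and equals $\frac{\pi^2}{4}$ at $x=\pi$, and it is precisely at $x=2\pi$, i.e.\ $s=\frac{2\pi}{\omega}$, that it would blow up. Since $0<\omega t_0\le\omega s\le\omega t\le\pi$ throughout the domain of integration, the second factor is bounded by a constant, and
\begin{equation*}
\int_{t_0}^t\frac{1}{s}\Bigl(\frac{\omega^2s^2}{2(1-\cos(\omega s))}\Bigr)^{\frac23}ds\le C\int_{t_0}^t\frac{ds}{s}=C\ln\Bigl(\frac{t}{t_0}\Bigr),
\end{equation*}
which yields \cref{est_larget2} after absorbing the constant from \cref{lem:estsig} into $C=C(k,\norme{f^{in}}_1,\norme{f^{in}}_\infty,\mathcal{E}_{in})$.

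I do not expect a genuine obstacle here: the estimate follows directly from \cref{lem:estsig} together with the elementary boundedness of $x^2/(2(1-\cos x))$ away from the zeros of $1-\cos x$. The only delicate point is the behaviour near $s=0$, where the factor $\frac{1}{s}$ is integrable only against a logarithm -- which is exactly what is claimed -- while the second factor stays bounded (indeed tends to $1$) as $s\to0$ and therefore does not worsen the divergence. This mild $\ln(t/t_0)$ growth is what will subsequently be balanced against the $(\omega t_0)^{2-3/d}$ factor from the small-time estimate when closing the Gr\"onwall inequality on $\intervalleff{0}{T_\omega}$.
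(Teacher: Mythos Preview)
Your proposal is correct and follows essentially the same route as the paper: apply \cref{est_larget} under the integral (via Minkowski), replace $M_k(t-s)^{1/(k+3)}$ by its supremum, and then use the boundedness of $x\mapsto x^2/(1-\cos x)$ on $\intervalleof{0}{\pi}$ to reduce the remaining $s$-integral to $\int_{t_0}^t\frac{ds}{s}=\ln(t/t_0)$. The only cosmetic difference is that the paper first rescales $s\mapsto\omega s$ to integrate over $\intervalleff{\omega t_0}{\omega t}$, whereas you keep the original variable; the argument is otherwise identical.
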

\begin{proof}
Using \cref{est_larget}, we can write

\begin{align*}
\norme{\int_{t_0}^t\sigma(s,t,\cdot)ds}_{k+3} & \leq C \underset{0\leq s\leq t}{\sup}  M_k(s)^\frac{1}{k+3} \int_{\omega t_0}^{\omega t}\frac{1}{s}\left(\frac{ s^2}{(1-\cos( s))} \right)^\frac{2}{3}ds \\
& \leq C \underset{0\leq s\leq t}{\sup}  M_k(s)^\frac{1}{k+3} \int_{\omega t_0}^{\omega t}\frac{1}{s}ds\\ 
\end{align*}
because in the same way as above the function $s \mapsto \left(\frac{s^2}{(1-\cos(s))}\right)^\frac{1}{d}$ is bounded on $\intervalleff{\omega t_0}{\omega t}$ (independently of $t_0,t$ or $\omega$) so that finally
\begin{equation*}
\norme{\int_{t_0}^t\sigma(s,t,\cdot)ds}_{k+3} \leq C \ln \left(\frac{t}{t_0}\right)\underset{0\leq s\leq t}{\sup}  M_k(s)^\frac{1}{k+3} 
\end{equation*}
with $C=C(k,\norme{f^{in}}_1,\norme{f^{in}}_\infty,\mathcal{E}_{in})$.
\end{proof}
\subsection{A Gr\"onwall inequality for $t \in \intervalleff{0}{T_\omega}$}
\label{ssec:gronwall}
Now we try to show propagation of moments on $\intervalleff{0}{T_\omega}$ by establishing a Gr\"onwall inequality like in \cite{LP91,R05} while 
\begin{proposition}\label{prop:Tw}
\cref{theo:main} is true for $T=T_\omega$.
\end{proposition}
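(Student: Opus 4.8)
The plan is to combine the differential inequality \eqref{eq:diff_Mk} for $M_k$ with the control of the electric field obtained in the previous subsections, all restricted to the interval $\intervalleff{0}{T_\omega}$ where $T_\omega=\frac{\pi}{\omega}$, so that the singularities at multiples of $\frac{2\pi}{\omega}$ never enter the estimates. First I would fix $k$ with $3<k\le k_0$ and write $\norme{E(t,\cdot)}_{k+3}\le \norme{E^0(t,\cdot)}_{k+3}+\norme{\tilde E(t,\cdot)}_{k+3}$, bound the first term by \eqref{ineq:E0}, and for the second use the Calder\'on--Zygmund reduction together with the splitting \eqref{Sigma} of $\Sigma$ into $\int_0^{t_0}$ and $\int_{t_0}^{t}$, where $t_0\in\intervalleoo{0}{T_\omega}$ is a parameter to be chosen. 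Plugging in \eqref{smallt}--\eqref{smallt1} and \eqref{est_larget2} gives, for $t\in\intervalleff{0}{T_\omega}$,
\begin{equation*}
\norme{E(t,\cdot)}_{k+3}\le C_1+C_2(\omega t_0)^{2-\frac{3}{d}}\Bigl(1+\sup_{0\le s\le t}M_k(s)\Bigr)^{\frac{3(l+3)}{(k+3)^2}}+C_3\ln\!\Bigl(\frac{t}{t_0}\Bigr)\sup_{0\le s\le t}M_k(s)^{\frac{1}{k+3}},
\end{equation*}
with all constants depending only on $k,\omega,\norme{f^{in}}_1,\norme{f^{in}}_\infty,\mathcal{E}_{in}$.

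Next I would feed this into \eqref{eq:diff_Mk}. Writing $N(t):=1+\sup_{0\le s\le t}M_k(s)$, which is nondecreasing, integrating \eqref{eq:diff_Mk} in time and using $M_k(t)^{\frac{k+2}{k+3}}\le N(t)^{\frac{k+2}{k+3}}$, one gets an inequality of the form
\begin{equation*}
N(t)\le N(0)+C\int_0^t\Bigl(1+(\omega t_0)^{2-\frac{3}{d}}N(s)^{\frac{3(l+3)}{(k+3)^2}}+\ln\!\bigl(\tfrac{s}{t_0}\bigr)N(s)^{\frac{1}{k+3}}\Bigr)N(s)^{\frac{k+2}{k+3}}\,ds.
\end{equation*}
The crucial book-keeping is on the exponents: the $\ln(t/t_0)$ term contributes $N^{\frac{1}{k+3}+\frac{k+2}{k+3}}=N^1$, i.e.\ exactly a linear power of $N$, which is precisely what a Gr\"onwall argument can absorb (the logarithm of $t/t_0$ is an integrable, bounded-in-$t$ weight on $\intervalleff{0}{T_\omega}$ once $t_0$ is fixed). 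The $t_0$-term contributes the higher power $N^{\frac{3(l+3)}{(k+3)^2}+\frac{k+2}{k+3}}$; one checks, using $k>3$ and the range $\frac{15}{11}\le d'<3$ coming from \cref{lem:estE} (so $l$ stays controlled, $l<k$, and $\frac{3(l+3)}{(k+3)^2}<\frac{1}{k+3}$ strictly), that this exponent is $<2$, say equal to $1+\theta$ with $\theta<1$. Then I would choose $t_0=t_0(k,\omega,\dots)$ small enough — small compared to $T_\omega=\frac{\pi}{\omega}$ — that the prefactor $C(\omega t_0)^{2-\frac{3}{d}}$ of the bad term is as small as we like; on the finite interval $\intervalleff{0}{T_\omega}$ this super-linear term then only produces a finite-time blow-up window strictly beyond $T_\omega$, hence cannot blow up on $\intervalleff{0}{T_\omega}$. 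Concretely one shows that for $t_0$ small the contribution of the $N^{1+\theta}$ term stays bounded on $\intervalleff{0}{T_\omega}$, leaving a genuinely linear Gr\"onwall inequality $N(t)\le N(0)+C\int_0^t N(s)\,ds$ on $\intervalleff{0}{T_\omega}$, whence $\sup_{0\le t\le T_\omega}M_k(t)\le C e^{CT_\omega}<\infty$ with $C$ of the asserted dependence.

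Finally, since by \cref{lem:ineq_mk} (applied as in \cref{est_ml}) a bound on $M_k$ controls $M_{k'}$ for $0\le k'\le k$, and since we may first run the argument at $k=k_0$, this yields \eqref{ineq:moment} on $\intervalleff{0}{T_\omega}$ for every $0\le k\le k_0$, which is exactly the assertion of \cref{theo:main} with $T=T_\omega$; the existence of a weak solution along which these a priori estimates hold is obtained by performing the argument on a regularized sequence (Arsenev-type approximation) and passing to the limit, as in \cite{LP91,R05}. The main obstacle is the exponent bookkeeping: one must verify that after the splitting the only term with a power of $N$ strictly above $1$ carries a prefactor that can be made small by choosing $t_0$ small relative to $T_\omega$, and simultaneously that the logarithmic weight $\ln(t/t_0)$ — which is where the original Lions--Perthame trick lives — produces exactly the borderline linear power and nothing worse; keeping $t_0<T_\omega=\frac{\pi}{\omega}$ throughout is what confines everything away from the cyclotron singularities, and this interplay between the choice of $t_0$, the fixed length $T_\omega$, and the $\omega$-dependence of the constants is the delicate point.
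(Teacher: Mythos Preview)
Your overall strategy---combining \eqref{eq:diff_Mk}, \eqref{ineq:E0}, \eqref{ineq:estsmallt} and \eqref{est_larget2}---matches the paper's, but the decisive step is handled differently, and your version does not close.

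First, two factual errors in the exponent bookkeeping you yourself flag as ``the main obstacle''. From $\frac{k+3}{d'}=\frac{l+3}{3}$ with $d'<3$ one gets $l+3=\frac{3(k+3)}{d'}>k+3$, so $l>k$, not $l<k$. Consequently
\[
\frac{3(l+3)}{(k+3)^2}=\frac{9}{d'(k+3)}>\frac{3}{k+3}>\frac{1}{k+3},
\]
the opposite of what you claim. Your conclusion that the total exponent equals $1+\theta$ with $0<\theta<1$ happens to be correct (for $k>3$ one checks $\frac{9}{d'(k+3)}+\frac{k+2}{k+3}<2$), but not for the reason you give.

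The real gap is your choice of $t_0$. You propose to fix $t_0$ once and for all, small enough that the prefactor $(\omega t_0)^{2-3/d}$ of the superlinear term becomes negligible. But the coefficient of the \emph{linear} term is $\ln(t/t_0)\le \ln(T_\omega/t_0)$, which diverges as $t_0\to 0$. So the two constants in $\dot N\lesssim C_4 N+\epsilon N^{1+\theta}$ move against each other: making $\epsilon$ small forces $C_4\to\infty$, and the associated blow-up time for this ODE need not exceed $T_\omega$. Your ``finite-time blow-up window strictly beyond $T_\omega$'' claim does not follow, and the clean Gr\"onwall $N(t)\le N(0)+C\int_0^t N(s)\,ds$ with a uniform $C$ is not obtained this way.

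The paper's argument---which \emph{is} the Lions--Perthame trick you allude to---chooses $t_0$ \emph{dynamically}, depending on $t$ and on $\mu_k(t)$: one imposes
\[
\Bigl(\frac{t_0}{t}\Bigr)^{2-\frac{3}{d}}\bigl(1+\mu_k(t)\bigr)^{\frac{3(l+3)}{(k+3)^2}}=1,
\]
so that the superlinear contribution collapses to the bounded quantity $C(\omega t)^{2-3/d}(1+t)^{(l+3)/(k+3)}$, while $\ln(t/t_0)$ becomes a constant multiple of $\ln(1+\mu_k(t))$. This yields
\[
\norme{E(t,\cdot)}_{k+3}\le C\bigl(1+\mu_k(t)\bigr)^{\frac{1}{k+3}}\bigl(1+\ln(1+\mu_k(t))\bigr),
\]
hence a log-Gr\"onwall inequality for $y=1+\mu_k$, namely $\ln y(t)\le \ln y(0)+CT+C\int_0^t \ln y(s)\,ds$, which closes on $\intervalleff{0}{T_\omega}$. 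The point is that $t_0$ absorbs the bad power of $\mu_k$ rather than being made uniformly small.
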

\begin{proof}
First, we define
\begin{equation}\label{def_mu}
\mu_k(t):=\underset{0\leq s\leq t}{\sup}  M_k(s)
\end{equation}
Next combining \cref{bound_rho_0}, \cref{smallt}, \cref{smallt1}, and \cref{est_larget2}, we obtain the following estimate for all $t\in \intervalleff{0}{T}$
\begin{equation}
\begin{aligned}\label{ineq:finalE1}
\norme{E(t,\cdot)}_{k+3} & \leq \norme{\rho_0(t,\cdot)}_\frac{3k+9}{k+6}+C (\omega t_0)^{2-\frac{3}{d}}(1+t)^\frac{l+3}{k+3}\left(1+\mu_k(t)\right)^\frac{3(l+3)}{(k+3)^2}\\
& +C\ln\left(\frac{t}{t_0}\right)\mu_k(t)^\frac{1}{k+3}
\end{aligned}
\end{equation}
Now, as was previously announced, we can absorb the term $\left(1+\mu_k(t)\right)^\frac{3(l+3)}{(k+3)^2}$ by choosing a small $t_0$ such that $t_0<t \leq T_\omega$.
We choose $t_0$ in a different way than what was done in \cite{LP91} and \cite{R05} by using the natural variable $\frac{t}{t_0}$. Hence $t_0$ is defined by the following relation
\begin{equation}\label{choice_t0}
(\frac{t_0}{t})^{2-\frac{3}{d}}\left(1+\mu_k(t)\right)^\frac{3(l+3)}{(k+3)^2}=1
\end{equation}
(the exponent $2-\frac{3}{d}$ is non-negative). Thus, we automatically have the inequality $t_0 < t \leq T_\omega$. 

Then we can bound the three terms in \cref{ineq:finalE1} so as to obtain
\begin{equation}\label{ineq:finalE}
\begin{aligned}
\norme{E(t,\cdot)}_{k+3}& \leq C_1+ C_2 t^{2-\frac{3}{d}}(1+t)^\frac{l+3}{k+3}+C_3\frac{3(l+3)}{(2-\frac{3}{d})(k+3)^2}\mu_k(t)^\frac{1}{k+3}\ln\left(1+\mu_k(t)\right)\\
& \leq C\left(1+\mu_k(t)\right)^\frac{1}{k+3}\left(1+\ln\left(1+\mu_k(t)\right)\right)
\end{aligned}
\end{equation}
with $C=C(T,k,\omega,\norme{f^{in}}_1,\norme{f^{in}}_\infty,\mathcal{E}_{in},M_k(f^{in}))$.

So now thanks to the inequality \cref{eq:diff_Mk} we can write
\begin{equation}
\frac{d}{dt}M_k(t)\leq C\left(1+\mu_k(t)\right)\left(1+\ln\left(1+\mu_k(t)\right)\right)
\end{equation}
and integrating the inequality on $\intervalleff{0}{t}$ we conclude that
\begin{equation*}
M_k(t)\leq M_k(0)+C\int_{0}^{t}\left(1+\mu_k(s)\right)\left(1+\ln\left(1+\mu_k(s)\right)\right)ds
\end{equation*}
for all $t \in \intervalleff{0}{T}$.

\noindent
Setting $y(t)=1+\mu_k(t)$, we have
\begin{equation}
0<y(t)\leq y(0)+C\int_{0}^{t}y(s)(1+\ln y(s))ds
\end{equation}
thus
\begin{equation}
\frac{Cy(t)(1+\ln y(t))}{y(0)+C\int_{0}^{t}y(s)(1+\ln y(s))ds}\leq C(1+\ln y(t))ds
\end{equation}
and integrating in time gives
\begin{equation}
\ln\left(\frac{y(t)}{y(0)}\right)\leq\ln\left(\frac{y(0)+C\int_{0}^{t}y(s)(1+\ln y(s))ds}{y(0)}\right)\leq C\int_{0}^{t}(1+\ln y(s))ds.
\end{equation}
Hence $t\mapsto \ln y(t)$ verifies a classical Gr\"onwall inequality
\begin{equation}\label{rec}
\ln y(t) \leq \ln y(0)+ Ct+C\int_{0}^{t}\ln y(s)ds \leq \ln y(0)+ CT+C\int_{0}^{t}\ln y(s)ds
\end{equation} 
which implies
\begin{equation}
\ln y(t) \leq \left(\ln y(0)+ CT\right)\exp\left(C t\right) \Leftrightarrow y(t) \leq \exp\left(CT\exp\left(C t\right)\right) y(0)^{\exp\left(C t\right)}
\end{equation}
for all $t \in \intervalleff{0}{T}$ with $C=C(T,k,\omega,\norme{f^{in}}_1,\norme{f^{in}}_\infty,\mathcal{E}_{in},M_k(f^{in}))$. 
\end{proof}

\subsection{Propagation of moments for all time}\label{ssec:propaallt}
We conclude the proof of \cref{theo:main} by showing propagation of moments for all time.
Since the constant $C$ in our estimate in \cref{prop:Tw} depends only on $T,k,\omega,\norme{f^{in}}_1,\norme{f^{in}}_\infty,\mathcal{E}_{in}$ and $M_k(f^{in})$, we can reiterate the procedure on any time interval $I_p=\intervalleff{pT_\omega}{(p+1)T_\omega}$. Indeed, $T,k$ and $\omega$ are constant $\norme{f(t)}_1$ and $\norme{f(t)}_\infty$ are conserved in time, the energy is bounded and $M_k(f)$ is exactly the quantity we are studying.

\begin{proposition}\label{prop:allt}
\cref{theo:main} is true for all $T>T_\omega$.
\end{proposition}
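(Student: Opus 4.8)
The plan is to prove \cref{prop:allt} by iterating \cref{prop:Tw} over the consecutive intervals $I_p=\intervalleff{pT_\omega}{(p+1)T_\omega}$, $p\in\NN$, exactly along the lines indicated just above its statement. Since \cref{sys:VPwB} is autonomous, \cref{prop:Tw} applies on $I_p$ to the datum $f(pT_\omega,\cdot,\cdot)$ after the harmless time shift $t\mapsto t-pT_\omega$, and the only thing to be checked is that the parameters on which the constant in \cref{prop:Tw} depends are controlled uniformly in $p$.

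First I would verify that the hypotheses of \cref{theo:main} are stable along the flow. The $L^1$ and $L^\infty$ norms of $f$ are conserved by the Vlasov equation, the full transport operator in \cref{sys:VPwB} being divergence-free in $(x,v)$ (note $\nabla_v\cdot(v\wedge B)=0$), so $\norme{f(t)}_1=\norme{f^{in}}_1$ and $\norme{f(t)}_\infty=\norme{f^{in}}_\infty$ for all $t\geq0$. The energy \cref{eq:energy} satisfies $\mathcal{E}(t)\leq\mathcal{E}_{in}$, since the magnetic part $v\wedge B$ of the Lorentz force does no work — formally $\nabla_v\cdot\big((v\wedge B)\abs{v}^2\big)=0$, so the evolution of $\mathcal{E}$ is identical to the unmagnetized case. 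Finally, applying \cref{prop:Tw} with $k=k_0$ yields $M_{k_0}(f(T_\omega))<\infty$, which is precisely the integrability assumption \cref{ineq:momentini} required to restart the argument from $f(T_\omega,\cdot,\cdot)$; and since the constant in \cref{prop:Tw} depends on the data only (monotonically) through $\norme{f^{in}}_1,\norme{f^{in}}_\infty,\mathcal{E}_{in}$, replacing $\mathcal{E}_{in}$ by $\mathcal{E}(pT_\omega)\leq\mathcal{E}_{in}$ at each step keeps it bounded by the same quantity.

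Then I would set up the induction proper. Fix $T>T_\omega$ and pick $N\in\NN$ with $NT_\omega\geq T$, so that $N$ depends only on $T$ and $\omega$ (as $T_\omega=\frac{\pi}{\omega}$). Build a weak solution on $\intervalleff{0}{NT_\omega}\supset\intervalleff{0}{T}$ by concatenating the solutions delivered by \cref{prop:Tw} on $I_0,\dots,I_{N-1}$, each time taking the trace of the previous piece at the left endpoint as new initial datum; gluing two weak solutions sharing that trace again produces a weak solution, which is a routine verification on the weak formulation. The Gr\"onwall step of \cref{prop:Tw} (see \cref{ineq:finalE} and what follows it) gives, on each $I_p$ and for every $0\leq k\leq k_0$, a bound $\sup_{I_p}M_k(f)\leq\Phi_k\big(M_k(f(pT_\omega))\big)$, where $\Phi_k\colon\intervallefo{0}{\infty}\to\intervallefo{0}{\infty}$ is a fixed non-decreasing function depending only on $k,\omega,\norme{f^{in}}_1,\norme{f^{in}}_\infty,\mathcal{E}_{in}$. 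Composing this $N$ times yields $\sup_{\intervalleff{0}{T}}M_k(f)\leq\Phi_k^{\circ N}\big(M_k(f^{in})\big)=:C<\infty$, which is of the announced form $C=C(T,k,\omega,\norme{f^{in}}_1,\norme{f^{in}}_\infty,\mathcal{E}_{in},M_k(f^{in}))$ because $N=N(T,\omega)$; this is exactly \cref{ineq:moment} on $\intervalleff{0}{T}$.

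I do not expect a genuine obstacle here: the crucial work was already done in \cref{prop:Tw}, whose constant by design sees the solution only through parameters that are conserved or a priori bounded. The only points needing a little care are the two bookkeeping items above — preservation of the finiteness of $M_{k_0}$ at each restart (so that \cref{ineq:momentini} survives the iteration) and the fact that the concatenated function is a genuine weak solution of \cref{sys:VPwB} on $\intervalleff{0}{T}$ — together with the harmless observation that the resulting bound on $M_k$ is a tower of exponentials of height $N\sim\frac{\omega T}{\pi}$, hence finite but growing fast in $T$.
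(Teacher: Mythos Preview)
Your proposal is correct and follows essentially the same route as the paper: iterate \cref{prop:Tw} on the consecutive intervals $I_p=\intervalleff{pT_\omega}{(p+1)T_\omega}$, using conservation of $\norme{f}_1,\norme{f}_\infty$, the energy bound, and the propagation of $M_{k_0}$ from the previous step to restart the argument at each $pT_\omega$. The paper's proof is organized as an explicit induction on $n$ tracking the constants $\alpha_p,\beta_p$ in the Gr\"onwall bound for $y(nT_\omega)$, while you abstract this into a non-decreasing map $\Phi_k$ and compose it $N=N(T,\omega)$ times; these are just two presentations of the same idea.
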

\begin{proof}
	First, we show by induction on $n$ that for all $n\in \NN^*$
	\begin{equation}\label{eq:induction}
	y(nT_\omega) \leq \beta_{n-1} \beta_{n-2}^{\alpha_{n-1}}\beta_{n-3}^{\alpha_{n-1}\alpha_{n-2}}...\beta_{0}^{\alpha_{n-1}\alpha_{n-2}...\alpha_1} y(0)^{\alpha_{n-1}...\alpha_{0}}
	\end{equation}
with $\beta_p=\exp\left(C_pT\exp\left(C_p T_\omega\right)\right)$ and $\alpha_p=\exp\left(C_p T_\omega\right)$ with
\begin{align*}
C_p & =C_p(T,k,\omega,\norme{f^{in}}_1,\norme{f^{in}}_\infty,\mathcal{E}_{in},M_k(f(pT_\omega)))\\
& = C_p(T,k,\omega,\norme{f^{in}}_1,\norme{f^{in}}_\infty,\mathcal{E}_{in},M_k(f^{in})).
\end{align*}

The initial case is simply a consequence of \cref{prop:Tw}. Proving the induction step is also easy because thanks to the induction hypothesis, $f(nT_\omega)$ verifies the assumptions of \cref{theo:main}. This means we can apply the same analysis as in the previous subsections while initializing system \cref{sys:VPwB} with $f(nT_\omega)$. 

Hence we obtain:
\begin{equation}
y((n+1)T_\omega) \leq \exp\left(C_nT\exp\left(C_n t\right)\right) y(nT_\omega)^{\exp\left(C_n t\right)}
\end{equation}
with $C_n=C_n(T,k,\omega,\norme{f^{in}}_1,\norme{f^{in}}_\infty,\mathcal{E}_{in},M_k(f(nT_\omega)))$.  The induction step is completed by writing $\beta_{n}=\exp\left(C_nT\exp\left(C_n t\right)\right)$ and $\alpha_{n}={\exp\left(C_n t\right)}$ and by applying the induction hypothesis \ref{eq:induction}.

To conclude we consider $t\in \intervalleff{0}{T}$ with $T>T_\omega$ and we write $t=(n+r)T_\omega$ with $n \in \NN$ and $0\leq r < 1$.
Like in the induction proof above, we can apply the same analysis as in the previous section while initializing with $f(nT_\omega)$ to obtain:
\begin{equation}
y(t) \leq \exp\left(C_nT\exp\left(C_n \left(t-\frac{n\pi}{\omega}\right)\right)\right)y(\frac{n\pi}{\omega})^{\exp\left(C_n \left(t-\frac{n\pi}{\omega}\right)\right)}.
\end{equation}
The proof is complete since we showed just before that we can bound $y(\frac{n\pi}{\omega})$.
\end{proof}

\subsection{Difficulty of controlling the electric field with the magnetic field in the source term}\label{ssec:Bsource}
	In this section, we present a strategy for the proof of \cref{theo:main} that does not permit us to conclude, but which is still interesting to detail because of its simplicity. 
	
	The idea is to consider the magnetic term $v\wedge B \cdot \nabla_v $ not as an added transport term in the Vlasov equation but as a source term. This allows us to write a new representation formula for the macroscopic density using the characteristics of the unmagnetized Vlasov-Poisson system.
\begin{lemma}	
	We have a representation formula for $\rho$,
	\begin{equation}\label{rep_rho2}
	\rho(t,x)=\rho_0(t,x)-\text{\rm div}_x \int_0^t s\int_v \left(f\left(E+v \wedge B\right)\right)\left(t-s,x-sv,v\right)dv ds
	\end{equation}
\end{lemma}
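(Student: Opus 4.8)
The plan is to mimic the proof of \cref{lem_rep_rho}, but using the unmagnetized free-transport characteristics $(x-sv,v)$ instead of the magnetized ones, and treating the full Lorentz force $E + v\wedge B$ as the source term in the Vlasov equation. First I would rewrite the Vlasov equation of \cref{sys:VPwB} as a transport equation in $(x,v)$ with the plain drift $v\cdot\nabla_x$ and source term $-\operatorname{div}_v\bigl(f(E+v\wedge B)\bigr)$; here one uses $\operatorname{div}_v(v\wedge B)=0$, so that $(E+v\wedge B)\cdot\nabla_v f = \operatorname{div}_v\bigl(f(E+v\wedge B)\bigr)$. The characteristics of the linear part are simply $s\mapsto(x+(s-t)v,v)$, i.e. at time $s=t-s'$ backwards they read $(x-s'v,v)$. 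Applying the Duhamel formula along these characteristics gives
\begin{equation*}
f(t,x,v)=f^{in}(x-tv,v)-\int_0^t \operatorname{div}_v\Bigl(f(E+v\wedge B)\Bigr)\bigl(t-s,x-sv,v\bigr)\,ds .
\end{equation*}

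The key step is then the same kind of ``transfer of derivatives'' identity used in \cref{lem_rep_rho}: along the free-transport flow, the $v$-gradient of a function evaluated at $(t-s,x-sv,v)$ produces both a genuine $v$-derivative and, via the chain rule through the $x$-argument $x-sv$, a factor $-s$ times an $x$-derivative. Concretely, for a smooth $g$,
\begin{equation*}
\nabla_v\bigl[g(t-s,x-sv,v)\bigr]=(\nabla_v g)(t-s,x-sv,v)-s\,(\nabla_x g)(t-s,x-sv,v).
\end{equation*}
Using this with $g = f(E+v\wedge B)$ componentwise, I would rewrite $\operatorname{div}_v\bigl(f(E+v\wedge B)\bigr)(t-s,x-sv,v)$ as $\operatorname{div}_v\bigl[f(E+v\wedge B)(t-s,x-sv,v)\bigr] + s\,\operatorname{div}_x\bigl[f(E+v\wedge B)(t-s,x-sv,v)\bigr]$ (the sign of the $x$-term flips because it is the inverse substitution). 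The purely-$v$-divergence term integrates to zero after the final integration in $v$ (it is an exact $v$-divergence of an $L^1_v$ quantity), leaving only the $x$-divergence contribution. Integrating the Duhamel formula in $v$, pulling $\operatorname{div}_x$ out of the $v$- and $s$-integrals, and recognising $\int f^{in}(x-tv,v)\,dv = \rho_0(t,x)$ yields exactly \cref{rep_rho2}.

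I expect the main obstacle to be purely bookkeeping rather than conceptual: one must be careful with the direction of the change of variables (the $+s$ versus $-s$ in front of the $\operatorname{div}_x$ term) and with justifying that the exact $v$-divergence term vanishes upon integration, which requires enough integrability/decay of $f$ in $v$ — guaranteed here since we work with the weak solutions built from smooth approximations for which $f$ and $vf$ are integrable. (As a sanity check one can set $B=0$ and recover the classical Lions--Perthame representation formula, and one sees the factor $s$ and the sign are consistent with \cref{rep_rho} in the limit $\omega\to 0$, as noted in the remark following \cref{lem_rep_rho}.) No Gr\"onwall or functional-analytic input is needed at this stage; the formula \cref{rep_rho2} is an identity, and its (ultimately unsuccessful) exploitation to control $\norme{E}_{k+3}$ is deferred to the discussion after the lemma.
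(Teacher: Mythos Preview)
Your proposal is correct and follows essentially the same approach as the paper's proof: treat the full Lorentz force as a source term, apply Duhamel along the free-transport characteristics $(x-sv,v)$, use the chain-rule identity to convert the evaluated $v$-divergence into an outer $v$-divergence plus $s$ times an $x$-divergence, and integrate in $v$ so that the $v$-divergence piece drops out. The only cosmetic difference is that the paper writes the identity directly for the vector field $(E+v\wedge B)f$ rather than first stating the scalar chain rule and then applying it componentwise, but the content is identical.
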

\begin{proof}
 We use the methods of characteristics and the Duhamel formula but this time with the magnetic term in the source term, which allows us to write
\begin{align*}
f(t,x,v)&=f^{in}(x-tv,v)\\
& -\int_{0}^{t}\left(E+v\wedge B\right)(s,x+(s-t)v)\cdot \nabla_v f(s,x+(s-t)v,v)ds\\
&=f^{in}(x-tv,v)-\int_{0}^{t} \text{\rm div}_v\left( \left(E+v\wedge B\right)f\right)(t-s,x-sv,v)ds
\end{align*}
where we used the change of variable $s=t-s$ and because $\text{\rm div}_v \left(E+v\wedge B\right)=0$.

\noindent Now we notice that
\begin{align*}
\text{\rm div}_v\left( \left(E+v\wedge B\right)f(t-s,x-sv,v)\right)&=-s\text{\rm div}_x\left( \left(E+v\wedge B\right)f(t-s,x-sv,v)\right)\\
&+\text{\rm div}_v\left( \left(E+v\wedge B\right)f\right)(t-s,x-sv,v)
\end{align*}
Using this equality and integrating in $v$ we obtain \eqref{rep_rho2}.
\end{proof}

\noindent Now we define
\begin{equation}\label{sigmas}
\left\{
\begin{aligned}
&\Sigma_E(t,x)=\int_0^t s \int_v E(t-s,x-sv)f\left(t-s,x-sv,v\right)dvds \\
&\Sigma_B(t,x)=\int_0^t s \int_v v\wedge B(t-s,x-sv)f\left(t-s,x-sv,v\right)dvds\\
&\Sigma(t,x)= \Sigma_E(t,x)+\Sigma_B(t,x)
\end{aligned}
\right.
\end{equation}
Thanks to the Calder\'on-Zygmund inequality, to estimate the $k+3$-norm of $E(t,\cdot)$, we only need to estimate the $k+3$-norms of $\Sigma_E(t,\cdot)$ and $\Sigma_B(t,\cdot)$.

Using the exact same analysis as in \cite{LP91,R05}, we obtain the following estimate for $\Sigma_E(t,\cdot)$ with $\mu(t)$ defined as in \eqref{def_mu}
\begin{equation}
\norme{\Sigma_E(t,\cdot)}_{k+3} \leq Ct_0^{2-\frac{3}{d}}(1+t)^\frac{l+3}{k+3}\left(1+\mu_k(t)\right)^\frac{3(l+3)}{(k+3)^2}\\
+C\ln\left(\frac{ t}{ t_0}\right)\mu_k(t)^\frac{1}{k+3},
\end{equation}
and then we choose $t_0$ like at \eqref{choice_t0} to obtain
\begin{equation}
\norme{\Sigma_E(t,\cdot)}_{k+3} \leq C\left(1+\mu_k(t)\right)^\frac{1}{k+3}\left(1+\ln\left(1+\mu_k(t)\right)\right)
\end{equation}
which is a good estimate, analogous to \cref{ineq:finalE}.

Next we try to estimate $\norme{\Sigma_B(t,\cdot)}_{k+3}$
\begin{align*}
\abs{\Sigma_B(t,x)} &=\omega \abs{\int_{0}^{t} s \int_v \begin{pmatrix}
	v_2\\
	-v_1\\
	0
	\end{pmatrix}f\left(t-s,x-sv,v\right)dv ds}\\
&\leq \omega \int_{0}^{t} s \int_v \abs{v}f\left(t-s,x-sv,v\right)dv ds=\omega \int_{0}^{t} s m_1(f\left(t-s,x-s\cdot,\cdot\right))ds
\end{align*}
So that 
\begin{equation*}
\begin{aligned}
\norme{\Sigma_B(t,\cdot)}_{k+3} & \leq \omega\int_{0}^{t}{s}ds \underset{0\leq s\leq t}{\sup}\norme{m_1(f\left(t-s,x-s\cdot,\cdot\right))}_{k+3}\\
& =\omega t^2 \underset{0\leq s\leq t}{\sup}\norme{m_1(f\left(t-s,x-s\cdot,\cdot\right))}_{k+3}
\end{aligned}
\end{equation*}
Unfortunately, $\norme{m_1(t)}_{k+3}$ can't be controlled by $M_k(t)^\alpha$ because when we apply lemma \ref{lem:ineq_mk} with $p=\infty,q=1,k'=1$ (which is the optimal case) we obtain
\begin{equation}
\norme{m_1(t)}_{k+3} \leq c \norme{f}^\frac{l-1}{l+3}_\infty M_l(t)^\frac{4}{l+3}
\end{equation}
with $k+3=\frac{l+3}{4}$ which implies $l>k$.

Indeed, its seems logical that with the added $v$ in the magnetic part of the Lorentz force, controlling $\Sigma_B$ requires a velocity moment of higher order than with $\Sigma_E$.
Thus $\norme{\Sigma_B(t,\cdot)}_{k+3}$ can't be controlled with $M_k(t)$, which means we can't deduce a Gr\"onwall inequality on $M_k(t)$ with this method.
\section{Proof of additional results}\label{sec:proofadditional}

\subsection{Proof of propagation of regularity}\label{ssec:proofreg}
First we begin by presenting the proof of the propagation of regularity. Here we directly adapt subsection 4.5 of \cite{G13}. We only present in detail the parts of the proof that involve the added magnetic field.

\begin{remark}
	The mass conservation and the energy bound can be directly deduced from the assumptions of \ref{theo:reg}
	\begin{equation}
	\iint_{\R^3\times \R^3} f(t,x,v)dxdv = \mathcal{M}^{in}=\iint_{\R^3\times \R^3}f^{in}dxdv < \infty
	\end{equation}
	\begin{equation}
	\frac{1}{2}\iint_{\R^3\times \R^3}\abs{v}^2f(t,x,v)dxdv+\frac{1}{2}\int_{\R^3}\abs{E(t,x)}^2dx\leq\mathcal{E}_{in}< \infty
	\end{equation}
	for a.e. $t\geq 0$.
\end{remark}

\begin{proof}
\textit{- First step: $L^\infty$ bound for $E$}

This step is the same in both magnetized and unmagnetized cases.
We have the following bound on $E$
\begin{equation}
\norme{E(t)}_\infty \leq C_1 C_T+C_2 \mathcal{M}^{in}.
\end{equation}

\noindent
\textit{- Second step: $L^\infty$ bound for $\rho$}

\noindent
We seek to show an inequality of the type
\begin{equation}
f(t,x,v)\leq h(\abs{v}-A_T t)
\end{equation}
for all $t\in \intervalleff{0}{T}$.\\
And so we compute
\begin{equation}
\frac{d}{dt}\iint_{\R^3 \times \R^3}(f(t,x,v) - h(\abs{v}-A_T t))_+dxdv.
\end{equation}
First we can write 
\begin{align*}
&\partial_t (f(t,x,v) - h(\abs{v}-A_T t))_+ \\
& =(\partial_t f +w'(\abs{v}-A_T t)A_T)\mathds{1}_{f(t,x,v)\geq h(\abs{v}-A_T t)}\\
& =(-v\cdot \nabla_x f -(E+v \wedge B)\cdot \nabla_v f+w'(\abs{v}-A_T t)A_T)\mathds{1}_{f(t,x,v)\geq h(\abs{v}-A_T t)}\\
& = -v\cdot \nabla_{x}(f(t,x,v) - h(\abs{v}-A_T t))_+\\
& - (E+v\wedge B)\cdot \nabla_v(f(t,x,v)-h(\abs{v}-A_T t))_+\\
& + w'(\abs{v}-A_T t)\left(A_T -\underset{=E\cdot \frac{v}{\abs{v}}}{\underbrace{(E+v\wedge B)\cdot \frac{v}{\abs{v}}}}\right)\mathds{1}_{f(t,x,v)\geq h(\abs{v}-A_T t)}
\end{align*}

so finally we obtain
\begin{equation}
\begin{aligned}
& \frac{d}{dt}\iint_{\R^3 \times \R^3}(f(t,x,v) - h(\abs{v}-A_T t))_+dxdv\\
& =\iint_{\R^3 \times \R^3}w'(\abs{v}-A_T t)\left(A_T-E\cdot \frac{v}{\abs{v}}\right)\mathds{1}_{f(t,x,v)\geq h(\abs{v}-A_T t)}dxdv.
\end{aligned}
\end{equation}
We now choose $A_T=\norme{E}_\infty \Rightarrow A_T-E(t,x)\cdot \frac{v}{\abs{v}} \leq 0$ a.e., and since $w'\leq 0$ then we have
\begin{equation}
\frac{d}{dt}\iint_{\R^3 \times \R^3}(f(t,x,v) - h(\abs{v}-A_T t))_+dxdv \leq 0.
\end{equation}
So the condition
\begin{equation}
f^{in}(x,v)\leq h(\abs{v})
\end{equation}
implies that
\begin{equation}
f(t,x,v)\leq h(\abs{v}-A_T t).
\end{equation}
Since $w$ in non-increasing, this gives us the $L^\infty$ bound on $\rho$
\begin{equation}
\norme{\rho}_{L^\infty(\intervalleff{0}{T}\times \R^3)} \leq R_T
\end{equation}

\noindent
- \textit{Third step: Bound for $D_{x,v} f$}\\
We set
\begin{equation}
L(t):= \norme{D_x f(t)}_\infty + \norme{D_v f(t)}_\infty,
\end{equation}
and differentiate the Vlasov equation in $x$ and $v$ to obtain
\begin{align*}
(\partial_t +v\cdot \nabla_{x} +(E+v\wedge B)\cdot \nabla_{v})\begin{pmatrix}
D_x f\\
D_v f
\end{pmatrix}=\begin{pmatrix}
0 & D_x E(t,x)^T\\
I & D_v(v\wedge B(t,x))	
\end{pmatrix}\begin{pmatrix}
D_x f\\
D_v f
\end{pmatrix}
\end{align*}
with 
\begin{equation}
D_v(v\wedge B(t,x))=\begin{pmatrix}
0 & -B_3(t,x) & B_2(t,x)\\
B_3(t,x) & 0 & -B_1(t,x)\\
-B_2(t,x) & B_1(t,x) & 0 
\end{pmatrix}=:A(t,x)
\end{equation}
so that
\begin{equation}
(\partial_t +v\cdot \nabla_{x} +(E+v\wedge B)\cdot \nabla_{v})(\abs{D_x f}+\abs{D_v f} )\leq (1+\abs{D_x E(t,x)}+\abs{A(t,x)})(\abs{D_x f}+\abs{D_v f} ).
\end{equation}
Then setting
\begin{equation}
J(t):= \int_{0}^{t} (1+\norme{D_x E(s)}_\infty+\norme{A(s)}_\infty)ds
\end{equation}
we have
\begin{align*}
& (\partial_t +v\cdot \nabla_{x} +(E+v\wedge B)\cdot \nabla_{v})\left((\abs{D_x f}+\abs{D_v f})e^{-J(t)} \right)\\
& \leq (\abs{D_x f}+\abs{D_v f})e^{-J(t)}(\abs{D_x E(t,x)}+\abs{A(t,x)}-\norme{D_x E(t)}_\infty-\norme{A(t)}_\infty)\leq 0
\end{align*}
By the maximum principle we thus have
\begin{equation}
(\abs{D_x f}+\abs{D_v f})e^{-J(t)}\leq (\norme{D_x f(0)}_\infty+\abs{D_v f(0)})e^{-J(0)}=L(0)
\end{equation}
and finally
\begin{equation}
L(t)\leq L(0)e^{J(t)}.
\end{equation}
- \textit{Fourth step: Bound for $D_x E$}

Like in the unmagnetized case, thanks to an extension of the Calder\'on-Zygmund inequality, we can bound $D_x E(t)$
\begin{equation}
\norme{D_x E(t)}_\infty \leq C\left(1+\ln\left(1+\norme{D_x \rho(t)}_\infty\right)\right).
\end{equation}
- \textit{Fifth step: Bound for $D_x \rho$}

Like in the unmagnetized case, we can show the following bound 
\begin{equation}
\abs{D_x \rho(t,x)} \leq  R_T e^{J(t)}
\end{equation}
for all $t \in \intervalleff{0}{T}$ and a.e. $x \in \R^3$.

\noindent
\textit{- Sixth step: Last estimate}\\
Firstly, let's mention that $A \in L^\infty(\intervalleff{0}{T} \times \R^3)$ because $B \in L^\infty(\intervalleff{0}{T} \times \R^3)$.
\begin{align*}
J(t) & = \int_{0}^{t} (1+\norme{D_x E(s)}_\infty+\norme{A(s)}_\infty)ds\\
& \leq T+ \int_{0}^{t} C\left(1+\ln\left(1+\norme{D_x \rho(s)}_\infty\right)\right)+\norme{A(s)}_\infty ds\\
& \leq T(1+C+\norme{A}_\infty)+\int_{0}^{t} C\underset{\leq \ln\left((1+R_T) e^{J(s)}\right)}{\underbrace{\ln\left(1+R_T e^{J(s)}\right)}}ds\\
& \leq T(1+C+\norme{A}_\infty) + C_T T \ln(1+R_T)+C_T \int_{0}^{t} J(s)ds.
\end{align*}
Thanks to the Gr\"onwall inequality
\begin{equation}
J(t) \leq T(1+C+\norme{A}_\infty+ C_T  \ln(1+R_T))e^{TC_T}.
\end{equation}
Thus we obtain the three following estimates
\begin{equation}
\norme{D_x \rho (t)}_\infty \leq R_T \exp(T(1+C+\norme{A}_\infty+ C_T  \ln(1+R_T))e^{TC_T})=R_T',
\end{equation}

\begin{equation}
\norme{D_x E (t)}_\infty \leq C_T (1+\ln(1+R_T')).
\end{equation}
and
\begin{equation}
L(t)\leq L(0)\exp(T(1+C+\norme{A}_\infty+ C_T  \ln(1+R_T))e^{TC_T}).
\end{equation}
\end{proof}

\subsection{Proofs regarding uniqueness}\label{ssec:proofuniq}
Now we turn to the proof of \cref{theo:uniq}, which is a direct adaptation of Loeper's paper \cite{L06}.

\begin{proof}
To prove our theorem, we only need to adapt subsection 3.2 from \cite{L06}. Thus we consider two solutions of \ref{sys:VPwB} $f_1, f_2$ with initial datum $f_0$. We write the corresponding densities, electric fields and characteristics $\rho_1,\rho_2$, $E_1,E_2$ and $Y_1, Y_2$. We define the following quantity $Q$
\begin{equation}
Q(t)=\frac{1}{2} \iint_{\R^3 \times \R^3} f_0(x,\xi)\abs{Y_1(t,x,\xi)-Y_2(t,x,\xi)}^2 dxd\xi
\end{equation}
Now we only need to differentiate $Q$
\begin{align*}
 \Dot{Q}(t) & =\iint_{\R^3 \times \R^3} f_0(x,\xi)(Y_1(t,x,\xi)-Y_2(t,x,\xi)) \cdot \partial_t(Y_1(t,x,\xi)-Y_2(t,x,\xi))dxd\xi\\
& = \iint_{\R^3 \times \R^3} f_0(x,\xi)(X_1(t,x,\xi)-X_2(t,x,\xi)) \cdot  (\Xi_1(t,x,\xi)-\Xi_2(t,x,\xi))dxd\xi\\
& + \iint_{\R^3 \times \R^3} f_0(x,\xi)(\Xi_1(t,x,\xi)-\Xi_2(t,x,\xi)) \cdot ( E_1(t,X_1)-E_2(t,X_2) )dxd\xi\\
& + \iint_{\R^3 \times \R^3} f_0(x,\xi)(\Xi_1(t,x,\xi)-\Xi_2(t,x,\xi)) \cdot ((\Xi_1(t,x,\xi)-\Xi_2(t,x,\xi)) \wedge B) dxd\xi
\end{align*}
We notice that the last term (4th line) is bounded by $Q(t)$ (using the Cauchy-Schwartz inequality).
Using the analysis from \cite{L06}, we conclude that
\begin{equation}
\frac{d}{dt} Q(t) \leq C Q(t)\left(1+\ln \frac{1}{Q(t)}\right)
\end{equation}
and thus $Q(0)=0 \Rightarrow Q(t)=0$ for all $t \geq 0$.
\end{proof}
Lastly, we detail the proof of \cref{prop:boundrho}.
\begin{proof}
Like in Corollary 3 of \cite{LP91}, with $k_0 >6$, we have sufficient regularity on $E$ to consider the weak characteristics associated to system \cref{sys:VPwB}. Hence the solution to \cref{sys:VPwB} is given by
\begin{equation*}
f(t,x,v)=f^{in}(X^0(t),V^0(t))
\end{equation*}
where $X^0(t),V^0(t)=(X(0;t,x,v),V(0;t,x,v))$ and we have
\begin{equation*}
\dot{X}(s;t,x,v)=V(s;t,x,v) \quad \dot{V}(s;t,x,v)= E(s,X(s;t,x,v))+V(s;t,x,v) \wedge B
\end{equation*}
with $X(t;t,x,v)=x$ and $V(t;t,x,v)=v$. To simplify things, we write $X(s)$ and $V(s)$ for the characteristics. Since $k_0 > 6$, we can show that $E$ is bounded on $\intervalleff{0}{T}\times \R^3$ so that we can write for $s \in \intervalleff{0}{t}$ (using the same notations as in \cite{LP91})
\begin{align*}
\abs{v-V(s)}& \leq R(t-s)+\omega \int_{s}^{t}\abs{V(u)}du\\
& \leq R(t-s)+\omega \int_{s}^{t}\abs{V(u)-v}du+\omega \int_{s}^{t} \abs{v}du\\
& \leq (R+\omega \abs{v})(t-s)\exp((t-s)\omega)\\
& \leq (R+\omega \abs{v})t\exp(t\omega)
\end{align*}
where the inequality between lines 2 and 3 is obtained thanks to the basic Gr\"onwall inequality.
Hence we can now write
\begin{equation*}
\abs{x+vt-X(0)} \leq (R+\omega \abs{v})t^2 \exp(t\omega)
\end{equation*}
so that we obtain
\begin{equation}
f(t,x,v) \leq \sup \{ f^{in}(y+vt,w), \abs{y-x}\leq (R+\omega \abs{v})t^2e^{\omega t},\abs{w-v}\leq (R+\omega \abs{v})te^{\omega t}  \}
\end{equation}
The condition \cref{eq:boundrho} is deduced from this inequality in the same way as in \cite{LP91} and implies that $\rho$ is bounded.
\end{proof}
\section{Appendix} 
We present a technical estimate on the moments that we separate from the main proof to lighten the presentation, but also because the proofs are identical in both magnetized and unmagnetized cases. One can find the proof of this lemma in \cite{R05} (pages 43-44). To clarify our work, we present a more detailed version of the proof below. 

\begin{lemma}\label{est_ml}
	Let $k> 3$ and $d'\in \intervalleof{\frac{3}{2}}{\frac{15}{4}}$, then for $l$ such that $\frac{k+3}{d'}=\frac{l+3}{3}$ we have the following estimate on $M_l(t)$
	\begin{equation}\label{eq_est_ml}
	\underset{0\leq s\leq t}{\sup} M_l(s) \leq C(1+t)^{l+3}\left(1+\underset{0\leq s\leq t}{\sup} M_k(s)\right)^\frac{3(l+3)}{k+3}
	\end{equation}
	with $C=C(k,\norme{f^{in}}_1,\norme{f^{in}}_\infty,\mathcal{E}_{in})$.
\end{lemma}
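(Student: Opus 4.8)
The goal is to bound the lower-order moment $M_l$ in terms of the higher-order moment $M_k$, where $l < k$ since $\frac{l+3}{3} = \frac{k+3}{d'} \leq \frac{k+3}{3/2} \cdot \frac13 \cdot \ldots$ — in any case $d' > \tfrac32$ forces $l < k$. The strategy I would use is the classical trick for propagating moments: split the velocity integral defining $m_l(f)(x)$ over the regions $\{|v| \leq R\}$ and $\{|v| > R\}$ for a parameter $R$ to be optimized, use the density $\rho$ and $\|f\|_\infty$ to control the low-velocity part, use $m_k$ to control the high-velocity part, then integrate in $x$ and optimize $R$. Concretely, write
\begin{equation*}
m_l(f)(x) = \int_{|v|\le R} |v|^l f\, dv + \int_{|v|>R} |v|^l f\, dv \le C R^{l+3}\|f\|_\infty + R^{l-k} m_k(f)(x),
\end{equation*}
and choosing $R$ to balance the two terms (roughly $R^{k+3} \sim m_k(f)(x)/\|f\|_\infty$) yields the pointwise bound $m_l(f)(x) \le C \|f\|_\infty^{\frac{k-l}{k+3}} m_k(f)(x)^{\frac{l+3}{k+3}}$. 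This is essentially a restatement of the interpolation in \cref{lem:ineq_mk}, so alternatively I could invoke that lemma directly.

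The subtlety — and the reason the statement carries the factor $(1+t)^{l+3}$ and an exponent $\frac{3(l+3)}{k+3}$ rather than $\frac{l+3}{k+3}$ — is that we cannot simply bound $\|f(t)\|_\infty$ and $M_k(t)$ by their initial values, because at this point in the argument $M_k(t)$ is precisely the quantity whose boundedness we are trying to establish via Grönwall; moreover $\|f(t)\|_\infty$ is conserved but the $L^1$ norm of $f$ in $x$-space is what couples things. The correct route is: integrate the pointwise bound in $x$ using H\"older with the conjugate pair that lets $\int m_k(f)^{\frac{l+3}{k+3}} dx$ be controlled by $M_k(f)^{\frac{l+3}{k+3}}$ times a power of $\|\rho\|$ or the spatial measure — but since there is no spatial decay, we must instead first bound $M_l$ in terms of a free-transport comparison. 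The standard device (this is what \cite{R05} does) is to use the energy bound: $\mathcal{E}(t) \le \mathcal{E}_{in}$ gives $M_2(t) \le 2\mathcal{E}_{in}$ for free. Then one writes $M_l$ via the differential inequality $\frac{d}{dt} M_l(t) \le C\|E(t)\|_{?} M_l(t)^{\theta}$ analogous to \cref{eq:diff_Mk}, but now — crucially — using only the a priori $L^p$ bounds on $E$ from \cref{lem:estE} (which do not require propagation of moments, only mass, $L^\infty$ bound and energy), so that $\|E(t)\|_{d'}$ with $d' \le \tfrac{15}{4}$ is bounded by a constant $C(\|f^{in}\|_1,\|f^{in}\|_\infty,\mathcal{E}_{in})$. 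Feeding this into Grönwall over $[0,t]$ produces a bound on $M_l(t)$ that grows polynomially in $t$, namely like $(1+t)^{l+3}$ after optimizing, but with a residual dependence on $\sup_{s\le t} M_k(s)$ coming from the interpolation of the intermediate moments appearing in the differential inequality — and this dependence enters raised to the power $\frac{3(l+3)}{k+3}$.

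The main obstacle, and where I would spend the most care, is tracking exactly which moments and norms appear when one differentiates $M_l$ and estimates the electric-field term, so as to verify that (i) only the a priori bounds of \cref{lem:estE}, not full propagation of moments, are invoked for $E$, and (ii) the $M_k$-dependence collects with exponent precisely $\frac{3(l+3)}{k+3}$ and the $t$-dependence precisely $(1+t)^{l+3}$. This is a bookkeeping exercise in applying \cref{lem:ineq_mk} with the right choice of exponents (in particular $k' = l-1$, $p=\infty$, $q=1$ for the field term, and $k'=0$ for converting $\rho$-type norms), combined with Grönwall's lemma in its linear form; the magnetic field plays no role here since, exactly as in \cref{eq:diff_Mk}, the Lorentz force term $v\wedge B \cdot \nabla_v f$ contributes nothing to $\frac{d}{dt}M_l$ after integration by parts. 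I would then simply cite \cite{R05} for the routine computation and present the exponent-chasing in full as promised in the appendix preamble.
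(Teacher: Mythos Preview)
Your overall architecture is right --- differentiate $M_l$, control the electric-field factor, integrate --- and you correctly note that the magnetic term drops out. The gap is in how the field is estimated. You propose to bound the $E$-term using only the a priori estimates of \cref{lem:estE}, writing ``$\|E(t)\|_{d'}$ with $d'\le\tfrac{15}{4}$ is bounded by a constant.'' But the differential inequality \cref{eq:diff_Mk} applied with $l$ in place of $k$ requires $\|E(t)\|_{l+3}$, and since $l+3=\tfrac{3(k+3)}{d'}\ge\tfrac{4(k+3)}{5}>\tfrac{24}{5}$ for every $k>3$, this exponent always lies \emph{outside} the range $\intervalleof{3/2}{15/4}$ covered by \cref{lem:estE}. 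So \cref{lem:estE} alone cannot close the estimate, and the $M_k$-dependence does not enter as a ``residual'' effect from interpolating intermediate moments: it is the primary mechanism by which $\|E\|_{l+3}$ is controlled at all.

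The paper's argument runs as follows. After writing $(l+3)\frac{d}{dt}M_l^{1/(l+3)}\le C\|E(t)\|_{l+3}$, one bounds $\|E\|_{l+3}$ by interpolation between $\|E\|_2$ (controlled by the energy) and $\|E\|_q$ for a large $q=\tfrac{3k+9}{6-k}$ (with a reduction to $3<k<6$; for $k\ge6$ one inserts an auxiliary $\bar k\in(3,6)$). The high norm is estimated via weak Young and \cref{lem:ineq_mk}: $\|E\|_q\le C\|\rho\|_{(k+3)/3}\le CM_k^{3/(k+3)}$. This gives $\|E(t)\|_{l+3}\le C(1+M_k(t))^{3/(k+3)}$; integrating in time and raising to the power $l+3$ then produces exactly the exponents $(1+t)^{l+3}$ and $\tfrac{3(l+3)}{k+3}$ in the statement. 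Your initial velocity-splitting paragraph is just a rederivation of \cref{lem:ineq_mk} and is not needed separately.
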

\begin{proof}
	We first use the differential inequality \cref{eq:diff_Mk}
	\begin{equation*}
	\frac{d}{dt} M_l(t) \leq C \norme{E(t)}_{l+3}M_l(t)^{\frac{k+2}{k+3}}
	\end{equation*}
	so
	\begin{equation*}
	(l+3)\frac{d}{dt} M_l(t)^\frac{1}{l+3} \leq C \norme{E(t)}_{l+3}
	\end{equation*}
	which implies
	\begin{align*}
	M_l(t) &\leq \left(M_l(0)^\frac{1}{k+3}+\frac{C}{m+3}\int_{0}^{t}\norme{E(s,\cdot)}_{l+3}ds\right)^{m+3}\\
	& \leq \left(M_l(0)^\frac{1}{k+3}+\frac{Ct}{m+3}\underset{0\leq s\leq t}{\sup}\norme{E(s,\cdot)}_{l+3}\right)^{m+3}
	\end{align*}
	This last inequality indicates that we need to control the $q$-norm of $E(t,\cdot)$ for any $q \geq l+3$ with $M_k(t)$, and this can be done by simply using the weak Young inequality and lemma \ref{lem:ineq_mk} with $p=\infty, q=1, k'=0, r=\frac{k+3}{3}$
	
	\begin{equation}
	\norme{E(s,\cdot)}_{q}=\norme{\nabla K_3 \star \rho(t,\cdot)}_{q}\leq C\norme{\rho(t,\cdot)}_{\frac{k+3}{3}}\leq C M_k(t)^\frac{3}{k+3}
	\end{equation}
	with $1+\frac{1}{q}=\frac{2}{3}+\frac{3}{k+3} \Rightarrow q=\frac{3k+9}{6-k}$ which implies that $k<6$.
	Furthermore, we want $q\geq l+3 \Leftrightarrow 6-k\leq d' \in \intervallefo{\frac{15}{11}}{3}$ so this implies $k>3$.
	
	\noindent
	Finally, with $3<k<6$, we can choose $d'\in  \intervallefo{\frac{15}{11}}{3}$ so that $l$ defined by $\frac{k+3}{d'}=\frac{l+3}{3}$ verifies $q\geq l+3$ ($q=\frac{3k+9}{6-k}$). 
	With all this, the interpolation inequalities on $L^p$ spaces allow us to write
	\begin{equation}\label{interp}
	\norme{E(s,\cdot)}_{l+3}\leq \norme{E(s,\cdot)}^\theta_{2}\norme{E(s,\cdot)}^\theta_{q}
	\end{equation}
	$\theta \in \intervalleff{0}{1}$.
	
	\noindent
	Using this estimate and Young's classical inequality implies \eqref{eq_est_ml}.
	
	\noindent If $k\geq 6$ then for all $q\in \intervalleoo{6}{+\infty}$ there exists $3<\bar{k}<6$ such that
	\begin{equation}
	q=\frac{3\bar{k}+9}{6-\bar{k}}\quad \text{\rm and} \quad \norme{E(s,\cdot)}_{q}\leq C\norme{\rho(t,\cdot)}_{\frac{\bar{k}+3}{3}}\leq C M_{\bar{k}}(t)^\frac{3}{\bar{k}+3}.
	\end{equation}
	$M_{\bar{k}}(t)<\infty$ because thanks to lemma \ref{lem:ineq_mk} with $p=1, q=\infty, k'=\bar{k}, r=\frac{k+\frac{3}{q}}{k'+\frac{3}{q}+\frac{k-k'}{p}}=1$ we have
	\begin{equation*}
	\norme{m_{\bar{k}}(t)}_r=M_{\bar{k}}(t)\leq c \norme{f}_1^\frac{k-\bar{k}}{k}M_k(t)^\frac{\bar{k}}{k}\leq C M_k(t)^\frac{\bar{k}}{k}<\infty
	\end{equation*}
	for all $3<\bar{k}<6$.
	
	\noindent
	Thus we choose $3<\bar{k}<6$ such that $q\geq l+3$ with $\frac{k+3}{d'}=\frac{l+3}{3}$ same as before, and now we try to estimate $\norme{E(s,\cdot)}_{q}$ with $M_k(t)$
	\begin{align*}
	\norme{E(s,\cdot)}_{q}\leq C \norme{\rho(s,\cdot)}_\frac{\bar{k}+3}{3}\leq C \norme{\rho(s,\cdot)}^{1-\alpha}_1 \norme{\rho(s,\cdot)}^\alpha_\frac{k+3}{3} &\leq C\left(1+\norme{\rho(s,\cdot)}_\frac{k+3}{3}\right)\\
	& \leq C(1+M_k(t))^\frac{3}{k+3}
	\end{align*}
	This last estimate combined with the interpolation inequality \eqref{interp} results in \eqref{eq_est_ml}.
\end{proof}

\section*{Acknowledgments}
I would like to give special thanks to my advisors Bruno Despr\'es and Fr\'ed\'erique Charles at LJLL for all our discussions as well as their advice. I would also like to thank Fran\c cois Golse for his precious notes, and for helping me understand a lot of technical steps.

\bibliographystyle{siamplain}
\bibliography{referencesVPwB}

\begin{thebibliography}{10}

\bibitem{AR75}
{\sc A.~A. Arsenev}, {\em Global existence of a weak solution of {Vlasov}'s
  system of equations}, U. S. S. R. comput. Math. and Math. Phys., 15 (1975),
  pp.~136--147.

\bibitem{BD85}
{\sc C.~Bardos and P.~Degond}, {\em Global existence for the {{Vlasov-Poisson}}
  equation in 3 space variables with small initial data}, Ann. Inst. H.
  Poincar\'e Anal. Non Lin\'eaire, 2 (1985), pp.~101--118.

\bibitem{B09}
{\sc M.~Bostan}, {\em The {{Vlasov-Poisson}} system with strong external
  magnetic field. {Finite Larmor} radius regime}, Asymptot. Anal., 61 (2009),
  pp.~91--123.

\bibitem{C99}
{\sc F.~Castella}, {\em Propagation of space moments in the {{Vlasov-Poisson}}
  equation and further results}, Ann. Inst. H. Poincar\'e Anal. Non Lin\'eaire,
  16 (1999), pp.~503--533.

\bibitem{CZ16}
{\sc Z.~Chen and X.~Zhang}, {\em Global existence to the {{Vlasov-Poisson}}
  system and propagation of moments without assumption of finite kinetic
  energy}, Commun. Math. Phys., 343 (2016), pp.~851--879.

\bibitem{CLMZ17}
{\sc N.~Crouseilles, M.~Lemou, F.~M\'ehats, and X.~Zhao}, {\em Uniformly
  accurate particle-in-cell method for the long time solution of the
  two-dimensional {Vlasov–Poisson}equation with uniform strong magnetic
  field}, Journal of Computational Physics, 346 (2017), pp.~172--190.

\bibitem{DF16}
{\sc P.~Degond and F.~Filbet}, {\em On the asymptotic limit of the three
  dimensional {{Vlasov-Poisson}} system for large magnetic field: Formal
  derivation}, J. Stat. Phys., 165 (2016), pp.~765--784.

\bibitem{D01}
{\sc J.~Duoandikoetxea}, {\em Fourier Analysis}, American Mathematical Society:
  Graduate Studies in Mathematics Volume 29, Providence, Rhode Island, 2001.

\bibitem{FHLS15}
{\sc E.~Frenod, S.~Hirstoaga, M.~Lutz, and E.~Sonnendr\"ucker}, {\em Long time
  behaviour of an exponential integrator for a {Vlasov-Poisson} system with
  strong magnetic field}, Commun. Comput. Phys., 18 (2015), pp.~263--296.

\bibitem{FS98}
{\sc E.~Frenod and E.~Sonnendr\"ucker}, {\em Homogenization of the {Vlasov}
  equation and of the {Vlasov-Poisson} system with a strong external magnetic
  field}, Asymptot. Anal., 18 (1998), pp.~193--213.

\bibitem{FS00}
{\sc E.~Frenod and E.~Sonnendr\"ucker}, {\em Long time behavior of the
  two-dimensional {Vlasov} equation with a strong external magnetic field},
  Math. Models Methods Appl. Sci., 10 (2000), pp.~539--553.

\bibitem{GJP00}
{\sc I.~Gasser, P.-E. Jabin, and B.~Perthame}, {\em Regularity and propagation
  of moments in some nonlinear {Vlasov} systems}, Proc. Roy. Soc. Edinburgh
  Sect. A, 130 (2000), pp.~1259--1273.

\bibitem{G13}
{\sc F.~Golse}, {\em Mean field kinetic equations}, 2013,
  \url{http://www.cmls.polytechnique.fr/perso/golse/M2/PolyKinetic.pdf}.

\bibitem{GS99}
{\sc F.~Golse and L.~Saint-Raymond}, {\em The {Vlasov-Poisson} system with
  strong magnetic field}, J. Math. Pures Appl., 78 (1999), pp.~791--817.

\bibitem{GS03}
{\sc F.~Golse and L.~Saint-Raymond}, {\em The {{Vlasov-Poisson}} system with
  strong magnetic field in quasineutral regime}, Math. Models Methods Appl.
  Sci., 13 (2003), pp.~661--714.

\bibitem{LL97}
{\sc E.~Lieb and M.~Loss}, {\em Analysis}, American Mathematical Society:
  Graduate Studies in Mathematics Volume 14, Providence, Rhode Island, 1997.

\bibitem{LP91}
{\sc P.~L. Lions and B.~Perthame}, {\em Propagation of moments and regularity
  for the 3-dimensional {Vlasov-Poisson} system}, Invent. Math., 105 (1991),
  pp.~415--430.

\bibitem{L06}
{\sc G.~Loeper}, {\em Uniqueness of the solution to the {Vlasov-Poisson} system
  with bounded density}, J. Math. Pures Appl., 86 (2006), pp.~68--79.

\bibitem{OU78}
{\sc T.~Okabe and S.~Ukai}, {\em On classical solutions in the large in time of
  two-dimensional {Vlasov}’s equation}, Osaka J. Math., 15 (1978),
  pp.~245--261.

\bibitem{P12}
{\sc C.~Pallard}, {\em Moment propagation for weak solutions to the
  {Vlasov-Poisson} system}, Comm. Partial Differential Equations, 37 (2012),
  pp.~1273--1285.

\bibitem{P14}
{\sc C.~Pallard}, {\em Space moments of the {Vlasov-Poisson} system:
  propagation and regularity}, SIAM J. Math. Anal., 46 (2014), pp.~1754--1770.

\bibitem{P92}
{\sc K.~Pfaffelmoser}, {\em Global classical solutions of the {Vlasov-Poisson}
  system in three dimensions for general initial data}, J. Differential
  Equations, 95 (1992), pp.~281--303.

\bibitem{R05}
{\sc G.~Rhein}, {\em Collisionless Kinetic Equations from Astrophysics - The
  {Vlasov-Poisson} System}, Handbook of Differential Equations: Evolutionary
  Equations, Volume 3, Elsevier/North– Holland, Amsterdam, 2007.

\bibitem{S91}
{\sc J.~Sch\"affer}, {\em Global existence of smooth solutions to the
  {Vlasov-Poisson} system in three dimensions}, Comm. Partial Differential
  Equations, 16 (1991), pp.~1313--1335.

\bibitem{S14}
{\sc A.~L. Skubachevskii}, {\em {Vlasov-Poisson} equations for a two-component
  plasma in a homogeneous magnetic field}, Russ. Math. Surv., 69 (2014),
  pp.~291--330.

\bibitem{ST17}
{\sc A.~L. Skubachevskii and Y.~Tsuzuki}, {\em Classical solutions of the
  {Vlasov-Poisson} equations with external magnetic field in a half-space},
  Computational Mathematics and Mathematical Physics volume, 57 (2017),
  pp.~541--557.

\end{thebibliography}
\end{document}